\newcommand{\s}{\vspace{0.15cm}}
\newtheorem{coro}{Corollary} 
\newtheorem{lemm}{Lemma}
\theoremstyle{remark}                  
\newtheorem{rema}{\bf Remark}
\newtheorem{exem}{\bf Example}
\begin{document}

\title{Algebraic models of cyclic $k$-gonal curves}

\author{Rub\'en A. Hidalgo}
\address{Departamento de Matem\'atica y Estad\'{\i}stica\\
Universidad de La Frontera, Francisco Salazar 01145, Temuco, Chile}
\email{ruben.hidalgo@ufrontera.cl}

\author{Sebasti\'an Reyes-Carocca}
\address{Departamento de Matem\'aticas, Facultad de Ciencias, Universidad de Chile, Las Palmeras, 3425 \~Nu\~noa, Santiago, Chile}
\email{sebastianreyes.c@uchile.cl}

\thanks{Partially supported by ANID Fondecyt Regular  Grants 1230001,  1220099 and 1230708}
\keywords{Riemann surfaces, Algebraic curves, Automorphisms, Moduli spaces}
\subjclass[2010]{30F10, 14H37, 30F40, 30F60}

\begin{abstract} 
In this paper, we describe explicit algebraic equations of tame cyclic $k$-gonal curves, where $k \geq 2$ is an integer, reflecting the action of the normalizer of a tame cyclic $k$-gonal automorphism. For $k$ a prime integer, this was previously done by A. Wootton.
\end{abstract}

\maketitle
\thispagestyle{empty}

\section{Introduction}
Groups of conformal automorphisms of closed Riemann surfaces is a classical area of research in complex and algebraic geometry, dating back to the nineteenth century, with contributions from Riemann, Klein, and others. Let $S$ be a closed Riemann surface of genus $g \geq 2$, and let ${\rm Aut}(S)$ be its group of conformal automorphisms. In \cite{Schwarz}, Schwarz observed that ${\rm Aut}(S)$ is finite and, in \cite{Hurwitz}, Hurwitz noted that its order is bounded above by $84(g-1)$. In \cite{Greenberg}, Greenberg proved that every finite group can be realized as a group (even the full group) of conformal automorphisms of some closed Riemann surface. As a consequence of the Riemann-Roch theorem, $S$ can be described by an irreducible complex (projective or affine) algebraic curve. Given some subgroup $H \leq {\rm Aut}(S)$, we can ask if we may find one of these algebraic curves from which the action of $H$ can be read. Except for particular cases, this is a computationally challenging task. In this paper, we restrict ourselves to the case of cyclic gonal surfaces.

A {\it $k$-gonal automorphism} of $S$ is an element
$\tau \in {\rm Aut}(S)$ of order $k \geq 2$  such that the quotient orbifold $S/\langle \tau \rangle$ has genus zero;
 we also say that that $\langle \tau \rangle$ is a {\it $k$-gonal group}, and that $S$ is a 
{\it cyclic $k$-gonal curve}.  A $2$-gonal automorphism is the hyperelliptic involution, which is known to be unique.
If, moreover, each fixed point of a non-trivial power of $\tau$ is also a fixed point of $\tau$, then we say that $\tau$ is {\it tame} (for instance, this is the situation if $k$ is a prime integer). If $\tau$ is tame and the rotation angle of $\tau$ at each of its fixed points is the same, then it is called {\it superelliptic}.

Let us fix some $k$-gonal automorphism $\tau$ of $S$. We may identify $S/\langle \tau \rangle$  with the Riemann sphere $\hat{\mathbb C}$.
Let $\pi:S \to \hat{\mathbb C}$ be a Galois branched covering with deck group $\langle \tau \rangle$, and let ${\mathcal B}=\{t_{1},\ldots, t_{n+1}\}$ be its branch locus (this corresponds to the set of cone points of $S/\langle \tau \rangle$). As $S$ has genus at least two, necessarily $n \geq 2$. We note that 
$\tau$ is tame if and only if $\pi$ is fully (or totally) ramified. 
An algebraic (plane) equation for $S$ reflecting the action of $\tau$, is known to be of form \begin{equation}\label{curva0}
{\small S: \; y^{k}=f(x)=(x-t_{1})^{l_{1}} \cdots (x-t_{n+1})^{l_{n+1}},}
\end{equation}
for some non-negative integers $l_{1},\ldots, l_{n+1} \in \{1,\ldots,k-1\}$  (if, for instance, $t_{n+1}=\infty$, then we must delete the factor $(x-t_{n+1})^{l_{n+1}}$). 

Let $N \leq {\rm Aut}(S)$ be the normalizer of $\langle \tau \rangle$, and set $\bar{N}=N/\langle \tau \rangle \leq {\rm PSL}_{2}({\mathbb C})$, which is a subgroup of M\"obius transformations keeping invariant ${\mathcal B}$. Apart from the trivial group, the possibilities for $\bar{N}$ are to be cyclic, dihedral, the alternating groups ${\mathcal A}_{4}$ and ${\mathcal A}_{5}$, or the symmetric group ${\mathfrak S}_{4}$. Moreover, if $n$ is even, then the only possibilities for $\bar{N}$ are the cyclic or dihedral groups. 
The structure of $N$ has been obtained in \cite{BCI,Brandt1,Brandt2} for $k$ prime, in \cite{K,Sanjeewa} for any $k$  and $\tau$ superelliptic, and in \cite{K} for general $k$.

In \cite{HQS}, there were provided a simple necessary and sufficient condition (in terms of $f(x)$ in \eqref{curva0}) for $\tau$ to be central in $N$ (in which case, we say that  $\tau$ is a {\it generalized superelliptic} automorphism). We recall such a condition in Corollary \ref{central}. This condition, in particular, permits us to observe that superelliptic automorphisms are examples of  generalized superelliptic ones.

In many cases, it holds that $N={\rm Aut}(S)$.  This is the situation, for instance, in each of the following cases: (i) $k<(n+1)/2$ is a prime integer \cite{Accola}, or
(ii) $\tau$ is tame, central in $N$ and $k<(n+1)/2$ \cite{K}, or
(iii) $k \geq 5n-2$ is a prime integer \cite{Hidalgo:pgrupo} (in fact, in this case, $\langle \tau \rangle$ is the unique $k$-Sylow subgroup of ${\rm Aut}(S)$).

In \cite{IY}, for  $k<(n+1)/2$ a prime integer, so $N={\rm Aut}(S)$  and $\tau$ is generalized superelliptic, the authors constructed algebraic curves for $S$ reflecting the action of $N$.
In \cite{SS} (see also, \cite{MPRZ}), for $k$ not necessarily prime, but $\tau$ assumed to be superelliptic (so, in particular, tame and central in $N$), the authors constructed algebraic curves for $S$ reflecting the action of $N$. In \cite{HQS2}, it is considered the case when $k$ is arbitrary but $\tau$ is generalized superelliptic (so central in $N$, but not necessarily tame).
In \cite{Wootton1}, for $k$ a prime integer (so $\tau$ is tame, but does not need to be central in $N$), Wootton obtained explicit equations reflecting the action of $N$. The main arguments in that paper are presented within the context of Fuchsian group theory.

In this paper, complementing \cite{Wootton1}, for any integer $k$ and $\tau$ being tame (but not necessarily central in $N$), we provide explicit equations reflecting the action of $N$ (see Section \ref{Sec:ciclico}). In Sections \ref{Sec:234} and \ref{Sec:5}, we explicitly do this for the case when $n=2,3,4,5$. As we will see in Lemma \ref{casoA5}, if $\bar{N} \cong {\mathcal A}_{5}$ and $\tau$ is tame, then $\tau$ is central in $N$, so the equations are already obtained in \cite{IY,SS,Wootton1}. In order to provide the desired description, we proceed as follows. First, note that in the algebraic model \eqref{curva0}, $\tau(x,y)=(x,\omega_{k}y)$, where $\omega_{k}=e^{2\pi i/k}$,  and $\pi(x,y)=x$.  Next, we consider the natural surjective homomorphism
$\theta:N \to \bar{N}$, whose kernel is $\langle \tau \rangle$, satisfying that, for every $\eta \in N$,  $\pi \circ \eta=\theta(\eta) \circ \pi$. 
Let $\eta \in N$, $e=\theta(\eta)$, and $m$ be the order of $e$. Then, 
$\eta(x,y)=(e(x),Q_{\eta}(x) y^{u_{\eta}})$, where $u_{\eta} \in \{1,\ldots,k-1\}$ satisfies that $\eta \tau \eta^{-1} = \tau^{u_{\eta}}$ (so, $u_{\eta}^{m} \equiv 1 \mod(k)$), and $Q_{\eta}(x)^{k}=f(e(x))/f(x)^{u_{\eta}}$ \cite[Lemma 1]{HQS} (Lemma \ref{lemaHQS}). We decompose ${\mathcal B}$ into disjoint subsets ${\mathcal B}_{j}$, where each of these subsets consists of all those branch values of $\pi$ appearing with the same exponent in \eqref{curva0}. The above form of $\eta$ (together with Lemma \ref{observabien}) permits us to observe that $\eta$ permutes these subsets, and allows us to write explicitly the form of $f(x)$ in \eqref{curva0} reflecting such a combinatorial fact, so reflecting the action of $N$.

 We should mention that our arguments are completely algebraic and therefore they can be used for $k$-gonal curves defined over any algebraically closed field of positive characteristic, under the assumption that $k$ is not divisible by the characteristic.

\section{Preliminaries}
In this section, we fix a cyclic $k$-gonal curve $S$, of genus $g \geq 2$, and a $k$-cyclic gonal automorphism $\tau \in {\rm Aut}(S)$.
Let $\pi:S \to \hat{\mathbb C}$ be a Galois branched covering, whose deck group is $\langle \tau \rangle$, and let us 
denote its branch locus by ${\mathcal B}=\{t_{1},\ldots,t_{n+1}\} \subset \hat{\mathbb C}$, where  $n \geq 2$ is an integer. We assume (without loss of generality) that $t_{1},\ldots,t_{n} \in {\mathbb C}$ (so either $t_{n+1} \in {\mathbb C}$ or $t_{n+1}=\infty$). We write $\omega_{k}=e^{2\pi i/k}$ for each integer $k \geq 2.$

\subsection{Algebraic model}\label{Sec:ecuaciondeS}

It is well known that an algebraic model for $S$ is of the form 
\begin{equation}\label{curva}
{\small S: \; y^{k}=f(x)=(x-t_{1})^{l_{1}} \cdots (x-t_{n+1})^{l_{n+1}},}
\end{equation}
where $l_{1},\ldots, l_{n+1} \in \{1,\ldots,k-1\}$, $\gcd(k,l_{1},\ldots,l_{n+1})=1$, and $l_{1}+\cdots+l_{n+1} \equiv 0 \mod(k)$ (if $t_{n+1}=\infty$, then we must delete the factor $(x-t_{n+1})^{l_{n+1}}$ in \eqref{curva}).
In this algebraic model, 
{\small
$\tau(x,y)=(x, \omega_{k}y), \; \mbox{ and } \; \pi(x,y)=x.$
}The branch (cone) order at $t_{j}$ is $k/d_{j}$, where $d_{j}=\gcd(k,l_{j})$, so the signature of $S/\langle \tau \rangle$ is of the form $(0;k/d_{1},\ldots,k/d_{n+1})$ and the genus of $S$ is
{\small $$g=1+\frac{1}{2}\left((n-1)k-(d_1+\cdots+d_{n+1})\right).$$}

\begin{rema}\label{Rem0}\mbox{}
\begin{enumerate}[leftmargin=15pt]
\item If $a \in {\rm PSL}_{2}({\mathbb C})$, then $S$ can also be described by the algebraic curve {\small $y^{k}=(x-a(t_{1}))^{l_{1}} \cdots (x-a(t_{n+1}))^{l_{n+1}}$}. Note that the exponents $l_1, \ldots, l_{n+1}$ are still the same.

\item The $k$-gonal automorphism  $\tau$ is tame if and only if $\gcd(k,l_{j})=1$, for every $j \in \{1,\ldots,n+1\}$, so 
 $g=(k-1)(n-1)/2$.  In particular, $(k-1)(n-1) \geq 4$.
 The set of fixed points of $\tau$ (which is the same for every nontrivial power of $\tau$) is 
${\rm Fix}(\tau)=\{q_{1}=(t_{1},0),\ldots,q_{n+1}=(t_{n+1},0)\}.$
 If $s_{j} \in \{1,\ldots,k-1\}$ is such that $s_{j}l_{j} \equiv 1\mod(k)$, then the rotation angle of $\tau$ about the fixed point  $(t_{j},0)$ if $\tau$ s $\angle(\tau,q_{j})=2\pi s_{j}/k$.
We note that, by replacing $\tau$ by its power $\tau^{s_{1}}$, we may assume that  $l_{1}=1$.

\item $\tau$ is a superelliptic automorphism of order $k$ if and only if $l_{1}=\cdots=l_{n+1}$. In this situation, $\tau$ is central in $N$ (see Corollary \ref{central}).

\item If we replace each $l_{j} \in \{1,\ldots,k-1\}$, in the algebraic model \eqref{curva}, 
by $l_{j}+r_{j}k \in {\mathbb Z}$, where $r_{j} \geq 0$ is any integer, then we obtain a new (singular) algebraic model representing the same curve $S$. We will admit such algebraic models because they allow us to simplify many computations. 

\end{enumerate}
\end{rema}

\subsection{The normalizer subgroup of $\langle \tau \rangle$}
Let $N \leq {\rm Aut}(S)$ be the normalizer of $\langle \tau \rangle$. So, $\bar{N}=N/\langle \tau \rangle \leq {\rm PSL}_{2}({\mathbb C})$ is a finite group that stabilizes the set ${\mathcal B}$.
The finite subgroups of ${\rm PSL}_{2}({\mathbb C})$, apart from the trivial group, are the order $m \geq 2$ cyclic groups ${\mathbb Z}_{m}$, the order $2m$ dihedral groups ${\mathbb D}_{m}$, the alternating groups ${\mathcal A}_{4}$, ${\mathcal A}_{5}$ and the symmetric group ${\mathfrak S}_{4}$.
There is a natural short exact sequence
\begin{equation}\label{short}
1 \to \langle \tau \rangle \to N \stackrel{\theta}{\to} \bar{N} \to 1, \; \mbox{where} \;\pi \circ \eta = \theta(\eta) \circ \pi.
\end{equation}

\begin{rema}
The following facts will be used frequently in our computations.
\begin{enumerate}[leftmargin=15pt]
\item  If $\eta \in N$, then we will denote by $u_{\eta} \in \{1,\ldots, k-1\}$ the value 
such that $\eta \tau \eta^{-1}=\tau^{u_{\eta}}$ (so $\gcd(k,u_{\eta})=1$). If $m$ is the order of $\theta(\eta)$, then 
$u_{\eta}^{m} \equiv 1 \mod(k)$.

\item If $\alpha, \beta \in N$, then $u_{\alpha\beta} \equiv u_{\alpha} u_{\beta} \mod(k)$.
\end{enumerate}
\end{rema}

\begin{lemm}\label{fijado} Let $\eta \in N$ and assume $\tau$ is tame. 
If $\theta(\eta)$ fixes a point in ${\mathcal B}$, then $\eta$ commutes with $\tau$, that is, $u_{\eta}=1$.
\end{lemm}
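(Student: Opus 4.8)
The plan is to use the tameness hypothesis to upgrade ``$\theta(\eta)$ fixes $t_i\in\mathcal B$'' to the much stronger ``$\eta$ fixes the point of $S$ lying over $t_i$'', and then to read off $u_\eta$ from the order of vanishing of $y$ at that point. Write $S$ in the algebraic model \eqref{curva}, and recall from Lemma \ref{lemaHQS} that $\eta(x,y)=(e(x),Q_\eta(x)\,y^{u_\eta})$ with $e=\theta(\eta)$, $Q_\eta\in\mathbb C(x)$, $Q_\eta(x)^{k}=f(e(x))/f(x)^{u_\eta}$, while $\pi(x,y)=x$ and $\tau(x,y)=(x,\omega_{k}y)$. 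Since $\tau$ is tame, $\gcd(k,l_j)=1$ for all $j$ and $\pi$ is totally ramified; in particular $\pi^{-1}(t_j)$ consists of the single point $q_j=(t_j,0)$, with ramification index $k$. By Remark \ref{Rem0}(1) we may assume that the point $t_i\in\mathcal B$ fixed by $e$ is finite (a M\"obius change of variable moving it off $\infty$ conjugates $e$ and leaves $u_\eta$, which depends only on the pair $(\eta,\tau)$, unchanged).

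First I would check that $\eta$ fixes $q_i$: from $\pi\circ\eta=e\circ\pi$ we get $\pi(\eta(q_i))=e(t_i)=t_i$, hence $\eta(q_i)\in\pi^{-1}(t_i)=\{q_i\}$. Next comes the key computation, carried out with the discrete valuation $v_{q_i}$ of the function field of $S$. Since each $x-t_j$ with $j\neq i$ is a unit at $q_i$ and $v_{q_i}(x-t_i)=k$, equation \eqref{curva} gives $k\,v_{q_i}(y)=l_i\,v_{q_i}(x-t_i)=l_ik$, so $v_{q_i}(y)=l_i$. Applying $v_{q_i}$ to the relation $y\circ\eta=Q_\eta(x)\,y^{u_\eta}$, and using that $\eta$ fixes $q_i$ and that $v_{q_i}(Q_\eta(x))=k\,v_{t_i}(Q_\eta)\equiv 0\pmod k$ (because $Q_\eta$ is pulled back from $\widehat{\mathbb C}$ through a point of ramification index $k$), we obtain
\[
0\equiv v_{q_i}\bigl(Q_\eta(x)\bigr)=v_{q_i}(y\circ\eta)-u_\eta\,v_{q_i}(y)=l_i(1-u_\eta)\pmod k .
\]
As $\gcd(l_i,k)=1$, this forces $u_\eta\equiv 1\pmod k$, and since $u_\eta\in\{1,\dots,k-1\}$ we conclude $u_\eta=1$, i.e.\ $\eta\tau\eta^{-1}=\tau$.

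The argument is short, and the one step that genuinely uses tameness — hence the crux — is the identity $\pi^{-1}(t_i)=\{q_i\}$, which converts the hypothesis into the assertion that $\eta$ \emph{fixes} $q_i$ rather than merely permuting the fibre over $t_i$; without total ramification the fibre has several points and $\eta$ need not fix any of them. The valuation computation is simply the function-field shadow of the geometric fact that $\tau$ and $\eta$ both fix $q_i$, so conjugation by $\eta$ fixes the eigenvalue of $\tau$ on the cotangent line $\mathfrak m_{q_i}/\mathfrak m_{q_i}^{2}$; that eigenvalue is a primitive $k$-th root of unity (as $\tau$ has order $k$ and acts linearly near $q_i$), whence $\zeta=\zeta^{u_\eta}$ and $u_\eta\equiv 1\pmod k$. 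The reduction to $t_i\neq\infty$ is the only bookkeeping required, and it is dispatched by Remark \ref{Rem0}(1).
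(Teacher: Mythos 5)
Your proof is correct, and it shares the paper's key geometric input but finishes differently. Like the paper, you use tameness exactly where it matters: the fibre $\pi^{-1}(t_i)$ is a single point $q_i$, so any lift of $e=\theta(\eta)$ must fix $q_i$, which is also a fixed point of $\tau$. From that point on the two arguments diverge. The paper picks a lift $\delta$ of $e$ fixing $q_i$ and invokes the classical fact that the ${\rm Aut}(S)$-stabilizer of a point is cyclic, hence abelian, so $\delta$ commutes with $\tau$; since $\eta\delta^{-1}\in\langle\tau\rangle$, so does $\eta$. You instead work directly with $\eta$ in the model \eqref{curva} and extract $u_\eta$ from a valuation computation at $q_i$: $v_{q_i}(y)=l_i$, $v_{q_i}(Q_\eta(x))\equiv 0 \pmod k$, and $\eta(q_i)=q_i$ force $l_i(1-u_\eta)\equiv 0\pmod k$, whence $u_\eta=1$ since $\gcd(k,l_i)=1$. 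Your closing remark about the eigenvalue of $\tau$ on the cotangent line at $q_i$ is essentially the paper's cyclic-stabilizer argument in local coordinates, but your main valuation argument is self-contained: it avoids citing the stabilizer theorem altogether and is purely a computation in the function field, which fits the paper's stated aim of arguments that transfer verbatim to algebraically closed fields of characteristic prime to $k$. The paper's route is shorter; yours buys independence from the auxiliary lift $\delta$ and from the cyclicity fact. The one bookkeeping step you need (moving $t_i$ away from $\infty$) is correctly handled via Remark \ref{Rem0}(1), since $u_\eta$ is defined by the relation $\eta\tau\eta^{-1}=\tau^{u_\eta}$ and does not depend on the chosen model.
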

\begin{proof}
If $e=\theta(\eta)$ fixes a point in ${\mathcal B}$, then there is a $\delta \in N$ such that $\theta(\delta)=e$ and $\delta$ has a common fixed point with $\tau$. As the ${\rm Aut}(S)$-stabilizer of a point is cyclic, it follows that $\delta$ and $\tau$ commute. Since $\eta \delta^{-1} \in \langle \tau \rangle$ (as both $\delta$ and $\eta$ are sent to $e$ under $\theta$), the result follows.
\end{proof}

\begin{rema}
If, in the hypothesis of the above lemma, we delete the condition for $\tau$ to be tame, then we can only ensure that some non-trivial power of $\tau$ commutes with $\eta$.
\end{rema}

The following result describes the algebraic form of those elements in $N$.

\begin{lemm}[Lemma 1 in \cite{HQS}]\label{lemaHQS}
Let $\eta \in N$ and $u_{\eta} \in \{1,\ldots,k-1\}$ be such that  $\eta \tau \eta^{-1}=\tau^{u_{\eta}}$. 
If $e=\theta(\eta)$, and $m$ is the order of $e$, then  
\begin{equation}\label{formaeta}
{\small \eta(x,y)=\left(e(x), Q_{e}(x) y^{u_{\eta}}\right),}
\end{equation}
where
\begin{equation}\label{formaQ}
{\small
Q_{e}(x)^{k}=\frac{f(e(x))}{f(x)^{u_{\eta}}}=\frac{(e(x)-t_{1})^{l_{1}} \cdots (e(x)-t_{n+1})^{l_{n+1}}}{(x-t_{1})^{l_{1}u_{\eta}} \cdots (x-t_{n+1})^{l_{n+1}u_{\eta}}}.
}
\end{equation}
\end{lemm}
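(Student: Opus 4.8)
The plan is to combine the description of the function field of $S$ with the two defining properties of an element $\eta \in N$: the covering relation $\pi \circ \eta = e \circ \pi$ coming from \eqref{short}, and the normalizing relation $\eta \tau \eta^{-1} = \tau^{u_{\eta}}$. First I would note that, since $\pi(x,y) = x$ and $\pi \circ \eta = e \circ \pi$, the first coordinate of $\eta(x,y)$ must equal $e(x)$; hence $\eta(x,y) = (e(x), h(x,y))$ for some rational function $h$ on $S$. Because the function field $\mathbb{C}(x,y)$ of $S$ is a degree-$k$ extension of $\mathbb{C}(x)$ — here the irreducibility of $y^{k} - f(x)$ is guaranteed by $\gcd(k, l_{1}, \ldots, l_{n+1}) = 1$ — with $\mathbb{C}(x)$-basis $\{1, y, \ldots, y^{k-1}\}$, I may write $h(x,y) = \sum_{j=0}^{k-1} a_{j}(x)\, y^{j}$ with uniquely determined $a_{j} \in \mathbb{C}(x)$.

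Next I would extract the shape of $h$ from the normalizing relation. Writing $\eta \tau = \tau^{u_{\eta}}\eta$ and evaluating both sides at $(x,y)$, the second coordinates give $h(x, \omega_{k} y) = \omega_{k}^{u_{\eta}} h(x,y)$, that is, $\sum_{j=0}^{k-1} a_{j}(x)\,(\omega_{k}^{\,j} - \omega_{k}^{u_{\eta}})\, y^{j} = 0$. By uniqueness of the expansion, $a_{j} = 0$ whenever $j \not\equiv u_{\eta} \pmod{k}$; since $0 \le j \le k-1$ and $1 \le u_{\eta} \le k-1$, this forces $h(x,y) = a_{u_{\eta}}(x)\, y^{u_{\eta}}$, and setting $Q_{e} := a_{u_{\eta}}$ yields \eqref{formaeta}. (Note $Q_{e}$ is determined by $\eta$ only up to a factor $\omega_{k}^{t}$, reflecting the freedom of replacing $\eta$ by $\tau^{t} \eta$.)

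Finally I would impose that $\eta$ maps $S$ to itself: the image point $(e(x), Q_{e}(x)\, y^{u_{\eta}})$ must satisfy $y^{k} = f(x)$, so $Q_{e}(x)^{k}\, y^{k u_{\eta}} = f(e(x))$; substituting $y^{k} = f(x)$ gives $Q_{e}(x)^{k}\, f(x)^{u_{\eta}} = f(e(x))$, which is \eqref{formaQ} after expanding $f$. I do not expect a genuine obstacle in this argument; the only points requiring a little care are the linear independence of $1, y, \ldots, y^{k-1}$ over $\mathbb{C}(x)$ (equivalently, passing to the smooth model of \eqref{curva}) and the routine bookkeeping when $e$ has a pole or when $t_{n+1} = \infty$, which only affect the explicit description of $Q_{e}$ and not the structure of the proof.
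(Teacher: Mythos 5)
Your proof is correct. There is no internal proof to compare against: the paper states this lemma with the attribution \cite[Lemma 1]{HQS} and does not reprove it (only the explicit form of $Q_{e}$ in the tame case is rederived later, inside the proof of Lemma \ref{observabien}). Your argument is the standard one and is complete: read the first coordinate off $\pi\circ\eta=\theta(\eta)\circ\pi$, expand the second coordinate in the basis $1,y,\ldots,y^{k-1}$ of the degree-$k$ extension $\mathbb{C}(x,y)/\mathbb{C}(x)$, use $\eta\tau=\tau^{u_{\eta}}\eta$ to see that only the $y^{u_{\eta}}$-component survives, and then substitute into $y^{k}=f(x)$ to get $Q_{e}(x)^{k}=f(e(x))/f(x)^{u_{\eta}}$. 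The one step worth spelling out is the irreducibility of $y^{k}-f(x)$ over $\mathbb{C}(x)$: by the Vahlen--Capelli criterion it suffices that $f$ is not a $p$-th power in $\mathbb{C}(x)$ for any prime $p\mid k$ (the extra clause for $4\mid k$ is vacuous over $\mathbb{C}$ since $-4=(1+i)^{4}$), and since the $t_{j}$ are pairwise distinct this is exactly what $\gcd(k,l_{1},\ldots,l_{n+1})=1$ provides; so your justification is sound. Two minor remarks: for a fixed $\eta$ the function $Q_{e}$ is uniquely determined (the $\omega_{k}^{t}$ ambiguity you mention concerns the choice of lift $\eta$ of $e$, not the expansion), and the expression $f(a(x))$ in \eqref{formaQ} is a typo for $f(e(x))$, which is what your derivation correctly yields.
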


\begin{rema}
 In the proof of Lemma \ref{observabien}, the expression of $Q_{e}(x)$ will be given explicitly.
\end{rema}

Let ${\mathcal B}_{1},\ldots,{\mathcal B}_{r}$ be the partition of ${\mathcal B}$ such that each ${\mathcal B}_{j}$ consists of all those cone points having the same exponent in \eqref{curva}. If  $\eta \in N$, and $e=\theta(\eta) \in \bar{N}$, then $e$ permutes the above collection ${\mathcal B}_{1},\ldots,{\mathcal B}_{r}.$ The condition for $\eta$ to commute with $\tau$ (i.e., $u_{\eta}=1$) is equivalent to $e$ keeping fixed each of these sets.

\begin{coro}[\cite{HQS}]\label{central}
The cyclic $n$-gonal automorphism $\tau$ is generalized superelliptic (i.e., central in $N$) if and only if, for every $e \in \bar{N}$, it holds that $e({\mathcal B}_{j})={\mathcal B}_{j}$, for every $j=1,\ldots,r$. 
\end{coro}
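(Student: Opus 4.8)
The plan is to deduce this directly from the structural facts already assembled, chiefly Lemma \ref{lemaHQS} together with the remark identifying the condition $u_\eta=1$ with "$e=\theta(\eta)$ fixes each $\mathcal{B}_j$". First I would recall that $\tau$ being central in $N$ means that $\eta\tau\eta^{-1}=\tau$ for every $\eta\in N$, i.e. $u_\eta=1$ for all $\eta$. So it suffices to show that, for a fixed $\eta\in N$ with $e=\theta(\eta)$, we have $u_\eta=1$ if and only if $e(\mathcal{B}_j)=\mathcal{B}_j$ for every $j$. Since $\theta$ is surjective with kernel $\langle\tau\rangle$, ranging $\eta$ over $N$ is the same as ranging $e$ over $\overline N$, so this pointwise equivalence immediately yields the corollary.

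Next I would prove the equivalence. For the easy direction, suppose $u_\eta=1$. Then equation \eqref{formaQ} reads
\begin{equation*}
Q_{e}(x)^{k}=\frac{(e(x)-t_{1})^{l_{1}}\cdots(e(x)-t_{n+1})^{l_{n+1}}}{(x-t_{1})^{l_{1}}\cdots(x-t_{n+1})^{l_{n+1}}},
\end{equation*}
and the point is that $Q_e$ is a genuine element of $\mathbb{C}(x)$ (a rational function), as guaranteed by Lemma \ref{lemaHQS}. Comparing the divisors of numerator and denominator on $\widehat{\mathbb C}$: the right-hand side must be a perfect $k$-th power in $\mathbb{C}(x)^{\times}$. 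Because $\gcd(k,l_1,\ldots,l_{n+1})=1$ and in fact (tameness aside) the exponents $l_j$ lie in $\{1,\ldots,k-1\}$, a zero or pole of the right-hand side at a point $t$ contributes an order which must be $\equiv 0 \bmod k$; the only way to achieve this, given the ranges of the exponents, is that $e$ sends each $t_i$ to some $t_j$ with the same exponent — in other words $e$ permutes $\mathcal{B}$ respecting the partition $\{\mathcal{B}_j\}$, hence $e(\mathcal{B}_j)=\mathcal{B}_j$ for all $j$ (using that $e$ is a bijection of $\mathcal B$). This is precisely the content already recorded in the remark preceding the corollary, so I would simply invoke it rather than re-deriving it.

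For the converse, suppose $e(\mathcal{B}_j)=\mathcal{B}_j$ for every $j$. Consider any $\eta\in N$ with $\theta(\eta)=e$ and its associated $u_\eta$. The same remark before the corollary tells us $u_\eta=1$ is equivalent to $e$ fixing each $\mathcal{B}_j$, so we are done in one line; alternatively, if one wants an argument from scratch: were $u_\eta\neq 1$, then matching the orders of zeros at a point $t_i$ on both sides of \eqref{formaQ} forces a congruence $l_i\equiv u_\eta\,l_{\sigma(i)}\bmod k$ for the permutation $\sigma$ induced by $e$; since $e$ preserves the blocks we have $l_i=l_{\sigma(i)}$, so $(u_\eta-1)l_i\equiv 0\bmod k$ simultaneously for all $i$, and as $\gcd(k,l_1,\ldots,l_{n+1})=1$ this gives $u_\eta\equiv 1 \bmod k$. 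The main (and essentially only) obstacle is the bookkeeping in the easy direction — making the divisor-comparison argument clean — but since that bookkeeping is exactly what is encapsulated in the preparatory remark, the corollary follows with almost no extra work. I would therefore keep the proof to a few lines, citing \cite{HQS} and the remark, and note that the "superelliptic $\Rightarrow$ generalized superelliptic" assertion follows because $l_1=\cdots=l_{n+1}$ makes the partition trivial ($r=1$), so the condition $e(\mathcal{B}_1)=\mathcal{B}_1$ holds automatically for every $e\in\overline N$.
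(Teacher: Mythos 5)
Your proposal is correct. Note that the paper itself offers no in-text proof of this corollary: it is quoted from \cite{HQS}, and the key equivalence (``$u_{\eta}=1$ if and only if $e=\theta(\eta)$ fixes each ${\mathcal B}_{j}$'') is simply asserted in the paragraph preceding the statement. Your argument supplies exactly the missing content, and it is the natural one: it is the same exponent-matching (divisor-comparison) computation that the paper carries out later in the proof of Lemma \ref{observabien}, but you run it in the correct generality for this corollary, namely without the tameness hypothesis. In particular, your converse step is the right one here: block-preservation gives $(u_{\eta}-1)l_{i}\equiv 0 \bmod k$ for all $i$ simultaneously, and only the joint condition $\gcd(k,l_{1},\ldots,l_{n+1})=1$ (not $\gcd(k,l_{i})=1$ for each $i$, which is tameness) is needed to conclude $u_{\eta}=1$; by contrast, Lemma \ref{observabien} genuinely needs tameness, as the paper's subsequent example $y^{4}=x^{2}(x^{2}-1)(x^{2}+1)^{3}$ shows. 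Your forward direction is also fine: with $u_{\eta}=1$ the local order of $f(e(x))/f(x)$ at each $t_{i}$ is $l_{\sigma(i)}-l_{i}\in(-k,k)$, so divisibility by $k$ forces equality of exponents; the only bookkeeping you gloss over (the point $e^{-1}(\infty)$ and a possible branch point at $\infty$) is harmless because the corresponding orders are congruent mod $k$ to $\sum_{j}l_{j}\equiv 0$, so they never disturb the congruences at the $t_{i}$.
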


\begin{rema}
In particular, if $r=1$ (the case of superelliptic automorphisms), then $\tau$ is central in $N$.
\end{rema}

\begin{rema}[Tame case]\label{deuso}
Let us assume $\tau$ to be tame.
Let $\eta \in N$ and $e=\theta(\eta) \in {\rm PSL}_{2}({\mathbb C})$. 
If $e$ fixes one of the ${\mathcal B}_{j}$, then it fixes each one of the others; so, $\eta$ commutes with $\tau$. This, in particular, holds if one of the sets ${\mathcal B}_{j}$ has cardinality different from the cardinalities of the other sets ${\mathcal B}_{i}$. 
\end{rema}

\begin{rema}
In \cite{AK}, for $\tau$ tame, the authors consider those cases for which there exists an ``extra automorphism", that is, some $\eta \in N \setminus \langle \tau \rangle$, whose order $\delta$ divides $k$. In \cite[Lemma 2.3]{AK}, the authors provide an algebraic form of the curve. In that algebraic model, $\eta(x,y)=(\omega_{\delta}x,y)$ and $\tau=(x,\omega_{k}y)$, so they commute. Note that the cyclic $4$-gonal curve {\small $y^{4}=(x-1)(x+1)^{3}(x-i)(x+i)^{3}$} admits the extra (of order $\delta=2$) automorphism $$\eta(x,y)=(-x,\tfrac{y^{3}}{(x+1)^{2}(x+i)^{2}}) \mbox{ which satisfies } \eta \tau \eta^{-1}=\tau^{-1}$$(this is in contradiction to that lemma). So, probably the condition that $\tau$ is superelliptic is needed (i.e., in the algebraic model \eqref{curva}, the exponents $l_{1}, \ldots, l_{n+1}$ are all equal to one).
\end{rema}

\subsection{The possibilities for the signatures of $S/N$}
As already noticed, the signature of $S/\langle \tau \rangle$ is of the form
$(0;k/d_{1},\ldots,k/d_{n+1})$, where $d_{j}=\gcd(k,l_{j})$. The form of the signatures of $S/N$, in terms of $\bar{N}$, is well known. Below, for $\bar{N}$ different from the trivial group, we recall them (this is obtained by the permutational action of $\bar{N}$ on the partition of ${\mathcal B}$ into the subsets of points with the same exponent $l_{j}$). 

\subsubsection{The cyclic case}
If $\bar{N}=\langle a \rangle \cong {\mathbb Z}_{m}$ ($m \geq 2$), then $|N|=mk$, 
$n+1=s_{0}+ml,$
where $s_{0}\in \{0,1,2\}$ stands for the number of fixed points of $a$ in ${\mathcal B}$ and $l \geq 0$ for the number of orbits in ${\mathcal B}$ of length $m$. In this case, 
$S/N$ has a signature of the form (where $2 \leq k_{j} \leq k$ are divisors of $k$). 
{\small
$$
\begin{array}{ll}
(1.1) \; (0;k_{1},\stackrel{l}{\ldots},k_{l},m,m), & s_{0}=0.\\
(1.2) \; (0;k_{1},\stackrel{l}{\ldots},k_{l},m,mk_{l+1}), & s_{0}=1.\\
(1.3) \; (0;k_{1},\stackrel{l}{\ldots},k_{l},mk_{l+1},mk_{l+2}), & s_{0}=2.
\end{array}
$$
}

Each of the above provides complex $(l-1)$-dimensional families. 

\begin{rema}
If $\tau$ is tame (so, each $k_{j}=k$), then, in cases (1.2) and (1.3), $\tau$ is central in $N$. This follows from the fact that, in these two cases, 
the generator of $\bar{N}$ fixes elements of ${\mathcal B}$ (Lemma \ref{fijado}). In these cases, equations for $S$, reflecting the action of $N$, were obtained in \cite{IY,SS,Wootton1}.
\end{rema}

\subsubsection{The dihedral case}
 If $\bar{N}=\langle a, b: a^{m}=b^{2}=(ab)^{2}=1\rangle \cong {\mathbb D}_{m}$ ($m \geq 2$), then $|N|=2mk$,
$n+1=2s_{0}+m(s_{1}+2l), s_{0} \in \{0,1\}, s_{1} \in \{0,1,2\},  l \geq 0,$
where  $2s_{0}$ (respectively, $ms_{1}$) stands for the number of fixed points of $a$ (respectively, the sum of the number of fixed points of the conjugates of $b$ and $ab$) in ${\mathcal B}$ and $l$ for the number of orbits in ${\mathcal B}$ of length  $2m$.
In this case, $S/N$ has a signature of the form  (where $2 \leq k_{j} \leq k$ are divisors of $k$)
{\small
$$
\begin{array}{ll}
(2.1) \; (0;k_{1},\stackrel{l}{\ldots},k_{l},2,2,m), & s_{0}=s_{1}=0.\\
(2.2) \; (0;k_{1},\stackrel{l}{\ldots},k_{l},2,2k_{l+1},m), & s_{0}=0, s_{1}=1.\\
(2.3) \; (0;k_{1},\stackrel{l}{\ldots},k_{l},2k_{l+1},2k_{l+2},m), & s_{0}=0, s_{1}=2.\\
(2.4) \; (0;k_{1},\stackrel{l}{\ldots},k_{l},2,2,mk_{l+1}), & s_{0}=1, s_{1}=0.\\
(2.5) \; (0;k_{1},\stackrel{l}{\ldots},k_{l},2,2k_{l+1},mk_{l+2}), & s_{0}=1, s_{1}=1.\\
(2.6) \; (0;k_{1},\stackrel{l}{\ldots},k_{l},2k_{l+1},2k_{l+2},mk_{l+3}), & s_{0}=1, s_{1}=2.
\end{array}
$$
}

Each of the above provides complex $l$-dimensional families. 

\begin{rema}
If $\tau$ is tame (so each $k_{j}=k$), then in cases (2.3), (2.5), and (2.6), there are generators of $\bar{N}$ having fixed points in ${\mathcal B}$, so $\tau$ is central in $N$. In these cases, equations for $S$, reflecting the action of $N$, are similar to the ones obtained in \cite{IY,SS,Wootton1}. 
\end{rema}

\subsubsection{The alternating case ${\mathcal A}_{4}$} 
If $\bar{N}=\langle a,b: a^{3}=b^{2}=(ab)^{3}=1\rangle \cong {\mathcal A}_{4}$, then $|N|=12k$, 
$n+1=4s_{0}+6s_{1}+12l,$ $s_{0} \in \{0,1,2\},$ $s_{1} \in \{0,1\},$  $l \geq 0,$
where $4s_{0}$ (respectively, $6s_{1}$) stands for the sum of fixed points of the conjugates of $a$ (respectively, the sum of fixed points of the conjugates of $b$) in ${\mathcal B}$, and $l$ for the number of orbits in ${\mathcal B}$ of length $12$.
 In this case, $S/N$ has a signature of the form (where $2 \leq k_{j} \leq k$ are divisors of $k$)
{\small
$$
\begin{array}{ll}
(3.1) \; (0;k_{1},\stackrel{l}{\ldots},k_{l},2,3,3), & s_{0}=s_{1}=0.\\
(3.2) \; (0;k_{1},\stackrel{l}{\ldots},k_{l},2k_{l+1},3,3), & s_{0}=0, s_{1}=1.\\
(3.3) \; (0;k_{1},\stackrel{l}{\ldots},k_{l},2,3,3k_{l+1}), & s_{0}=1, s_{1}=0.\\
(3.4) \; (0;k_{1},\stackrel{l}{\ldots},k_{l},2k_{l+1},3,3k_{l+2}), & s_{0}=1, s_{1}=1.\\
(3.5) \; (0;k_{1},\stackrel{l}{\ldots},k_{l},2,3k_{l+1},3k_{l+2}), & s_{0}=2, s_{1}=0.\\
(3.6) \; (0;k_{1},\stackrel{l}{\ldots},k_{l},2k_{l+1},3k_{l+2},3k_{l+3}), & s_{0}=2, s_{1}=1.
\end{array}
$$
}

Each of the above provides complex $l$-dimensional families.

\begin{rema}
If $\tau$ is tame, then in cases (3.4), (3.5), and (3.6), there are generators of $\bar{N}$ having fixed points in ${\mathcal B}$, so $\tau$ is central in $N$. In these cases, equations for $S$, reflecting the action of $N$, are similar to the ones obtained in \cite{IY,SS,Wootton1}. 
\end{rema}

\subsubsection{The symmetric case ${\mathfrak S}_{4}$}
If $\bar{N}=\langle a,b: a^{4}=b^{2}=(ab)^{3}=1\rangle \cong {\mathfrak S}_{4}$, then $|N|=24k$, 
$n+1=6s_{0}+12s_{1}+8s_{2}+24l,$ $s_{0},s_{1},s_{2} \in \{0,1\},$ $l \geq 0,$
where $6s_{0}$ (respectively, $12s_{1}$ and $8s_{2}$) stands for the sum of fixed points of the conjugates of $a$ (respectively, the conjugates of $b$ and  $ab$) in ${\mathcal B}$, and $l$ for the number of orbits in ${\mathcal B}$ of length $24$.
 In this case,  $S/N$ has a signature of the form (where $2 \leq k_{j} \leq k$ are divisors of $k$)
{\small
$$
\begin{array}{ll}
(4.1) \; (0;k_{1},\stackrel{l}{\ldots},k_{l},2,3,4), & s_{0}=s_{1}=s_{2}=0.\\
(4.2) \; (0;k_{1},\stackrel{l}{\ldots},k_{l},2,3k_{l+1},4), & s_{0}=0, s_{1}=0, s_{2}=1.\\
(4.3) \; (0;k_{1},\stackrel{l}{\ldots},k_{l},2k_{l+1},3,4), & s_{0}=0, s_{1}=1, s_{2}=0.\\
(4.4) \; (0;k_{1},\stackrel{l}{\ldots},k_{l},2k_{l+1},3k_{l+2},4), & s_{0}=0, s_{1}=s_{2}=1.\\
(4.5) \; (0;k_{1},\stackrel{l}{\ldots},k_{l},2,3,4k_{l+1}), & s_{0}=1, s_{1}=s_{2}=0.\\
(4.6) \; (0;k_{1},\stackrel{l}{\ldots},k_{l},2,3k_{l+1},4k_{l+2}), & s_{0}=1, s_{1}=0, s_{2}=1.\\
(4.7) \; (0;k_{1},\stackrel{l}{\ldots},k_{l},2k_{l+1},3,4k_{l+2}), & s_{0}=1, s_{1}=1, s_{2}=0.\\
(4.8) \; (0;k_{1},\stackrel{l}{\ldots},k_{l},2k_{l+1},3k_{l+2},4k_{l+3}), & s_{0}=1, s_{1}=s_{2}=1.
\end{array}
$$
}

Each of the above provides complex $l$-dimensional families. 

\begin{rema}
If $\tau$ is tame (so, $k_{j}=k$), then in cases (4.4), (4.6), (4.7), and (4.8), there are generators of $\bar{N}$ having fixed points in ${\mathcal B}$, so $\tau$ is central in $N$. In these cases, equations for $S$, reflecting the action of $N$, are similar to the ones obtained in \cite{IY,SS,Wootton1}.
\end{rema}

\subsubsection{The alternating case ${\mathcal A}_{5}$} 
If $\bar{N}=\langle a,b: a^{5}=b^{2}=(ab)^{3}=1\rangle \cong{\mathcal A}_{5}$, then $|N|=60k$,
$n+1=12s_{0}+30s_{1}+20s_{2}+60l,$ $s_{0},s_{1},s_{2} \in \{0,1\},$ $l \geq 0,$
where $12s_{0}$ (respectively, $30s_{1}$ and $20s_{2}$) stands for the sum of fixed points of the conjugates of $a$ (respectively, the conjugates of $b$ and  $ab$) in ${\mathcal B}$, and $l$ for the number of orbits in ${\mathcal B}$ of length $60$.
 In this case,  $S/N$ has a signature of the form (where $2 \leq k_{j} \leq k$ are divisors of $k$)
{\small
$$
\begin{array}{ll}
(5.1) \; (0;k_{1},\stackrel{l}{\ldots},k_{l},2,3,5), & s_{0}=s_{1}=s_{2}=0.\\
(5.2) \; (0;k_{1},\stackrel{l}{\ldots},k_{l},2,3k_{l+1},5), & s_{0}=0, s_{1}=0, s_{2}=1.\\
(5.3) \; (0;k_{1},\stackrel{l}{\ldots},k_{l},2k_{l+1},3,5), & s_{0}=0, s_{1}=1, s_{2}=0.\\
(5.4) \; (0;k_{1},\stackrel{l}{\ldots},k_{l},2k_{l+1},3k_{l+2},5), & s_{0}=0, s_{1}=s_{2}=1.\\
(5.5) \; (0;k_{1},\stackrel{l}{\ldots},k_{l},2,3,5k_{l+1}), & s_{0}=1, s_{1}=s_{2}=0.\\
(5.6) \; (0;k_{1},\stackrel{l}{\ldots},k_{l},2,3k_{l+1},5k_{l+2}), & s_{0}=1, s_{1}=0, s_{2}=1.\\
(5.7) \; (0;k_{1},\stackrel{l}{\ldots},k_{l},2k_{l+1},3,5k_{l+2}), & s_{0}=1, s_{1}=1, s_{2}=0.\\
(5.8) \; (0;k_{1},\stackrel{l}{\ldots},k_{l},2k_{l+1},3k_{l+2},5k_{l+3}), & s_{0}=1, s_{1}=s_{2}=1.
\end{array}
$$
}

Each of the above provides complex $l$-dimensional families.
In each case, $\tau$ is central in $N$ (see Lemma \ref{casoA5} below). So, equations for $S$, reflecting the action of $N$, are similar to the ones obtained in \cite{IY,SS,Wootton1}. 

\begin{lemm}\label{casoA5}
If $\bar{N} \cong {\mathcal A}_{5}$, then $\tau$ is central in $N$. 
\end{lemm}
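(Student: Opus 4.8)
The plan is to show that no element $e$ of $\overline{N}\cong\mathcal{A}_5$ can permute the pieces $\mathcal{B}_1,\dots,\mathcal{B}_r$ of the partition of $\mathcal{B}$ nontrivially, so that by Corollary \ref{central} the automorphism $\tau$ is central in $N$. By Remark \ref{deuso}, if an element $e\in\overline{N}$ fixes one of the $\mathcal{B}_j$ setwise, it fixes all of them, and then the corresponding $\eta$ commutes with $\tau$. So it suffices to prove that $\theta^{-1}(e)$ commutes with $\tau$ for every $e\in\overline{N}$; equivalently, using Lemma \ref{fijado}, it is enough to show that every $e\in\overline{N}\setminus\{1\}$ has a fixed point lying in $\mathcal{B}$.

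First I would recall that the possible signatures of $S/N$ in the $\mathcal{A}_5$ case are exactly (5.1)–(5.8) listed above, parametrized by $s_0,s_1,s_2\in\{0,1\}$ (indicating whether the orbits of the stabilizers of the $5$-, $2$-, and $3$-cycles — of sizes $12$, $30$, $20$ — meet $\mathcal{B}$) and $l\ge 0$. The crucial arithmetic input is the constraint $(k-1)(n-1)\ge 4$ from Remark \ref{Rem0}(2) together with $n\ge 2$, and in the tame case $n+1=12s_0+30s_1+20s_2+60l$. Since $\mathcal{A}_5$ is simple and every nontrivial element lies in one of the three conjugacy classes of cyclic subgroups of orders $2,3,5$, and each such subgroup has a fixed orbit on $\widehat{\mathbb{C}}$ of size $30,20,12$ respectively, an element $e$ of order $2$ (resp. $3$, $5$) fails to have a fixed point in $\mathcal{B}$ precisely when $s_1=0$ (resp. $s_2=0$, $s_0=0$). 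Thus the genuinely dangerous case is $s_0=s_1=s_2=0$, i.e. signature (5.1) with $n+1=60l$, $l\ge 1$; in all other cases at least one generator — hence, by simplicity and the fact that $\mathcal{A}_5$ is generated by any two of its "axis" subgroups of coprime orders, in fact all nontrivial elements after a conjugation argument — has a fixed point in $\mathcal{B}$.

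The heart of the argument is therefore to rule out, or rather to handle, the case $\mathcal{B}$ being a single free $\mathcal{A}_5$-orbit (or a union of such, $s_0=s_1=s_2=0$). Here no nontrivial element of $\overline N$ fixes a point of $\mathcal{B}$, so Lemma \ref{fijado} gives no leverage and I must argue differently: I would look directly at the partition $\{\mathcal{B}_j\}$. Each $\mathcal{B}_j$ is $\overline N$-invariant (the exponents $l_j$ are permuted by $e$ via Lemma \ref{lemaHQS}, and points with equal exponent go to points with equal exponent), and $\overline N$ acts on the free orbits; but a $\overline N$-invariant subset of a disjoint union of free $\mathcal{A}_5$-orbits is automatically a union of full orbits, so the action of $\overline N$ on the set of blocks $\{\mathcal{B}_j\}$ permutes them — this does not yet force the permutation to be trivial. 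To close this, I would instead argue via the structure of $N$: if some $\eta\in N$ had $u_\eta\ne 1$, then $\theta(\eta)=e$ has order $m\mid k$ with $u_\eta^m\equiv 1\pmod k$ and $u_\eta\not\equiv 1$, and one extracts from $\mathcal{A}_5\hookrightarrow{\rm Aut}(\mathbb{Z}_k)$-type considerations (the conjugation action $N\to{\rm Aut}(\langle\tau\rangle)$ factors through $\overline N$, and its image is a quotient of $\mathcal{A}_5$, hence trivial or all of $\mathcal{A}_5$ since $\mathcal{A}_5$ is simple — but ${\rm Aut}(\mathbb{Z}_k)$ is abelian, so a nonabelian simple group cannot inject, forcing the image to be trivial) the contradiction. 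This last observation — that the conjugation action of $N$ on $\langle\tau\rangle$ gives a homomorphism $\overline N=\mathcal{A}_5\to{\rm Aut}(\mathbb{Z}_k)$ into an abelian group, which must therefore be trivial — is in fact a clean uniform proof that bypasses the signature case analysis entirely, and I expect this to be the cleanest route.

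\medskip

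The main obstacle I anticipate is not conceptual but bookkeeping: making sure the homomorphism $\overline N\to{\rm Aut}(\langle\tau\rangle)\cong{\rm Aut}(\mathbb{Z}_k)$ is genuinely well-defined, i.e. that the conjugation action of $N$ on its normal subgroup $\langle\tau\rangle$ has $\langle\tau\rangle$ in its kernel — this holds because $\langle\tau\rangle$ is cyclic (inner automorphisms of an abelian group are trivial), so the action descends to $\overline N$. Once that is in place, the conclusion $u_\eta\equiv 1\pmod k$ for all $\eta\in N$ is immediate from the simplicity of $\mathcal{A}_5$ and the commutativity of ${\rm Aut}(\mathbb{Z}_k)\cong(\mathbb{Z}/k\mathbb{Z})^\times$, and then Corollary \ref{central} (or directly the definition of generalized superelliptic) finishes the proof.
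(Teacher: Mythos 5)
Your final argument is correct, and it takes a genuinely different (more structural) route than the paper. The paper works directly with the presentation $\overline{N}=\langle a,b: a^{5}=b^{2}=(ab)^{3}=1\rangle$: lifting $a,b$ to $\alpha,\beta\in N$, the relations force $u_{\alpha}^{5}\equiv u_{\beta}^{2}\equiv (u_{\alpha}u_{\beta})^{3}\equiv 1 \pmod k$, and a short modular-arithmetic manipulation ($u_{\alpha}^{3}\equiv u_{\beta}$, hence $u_{\alpha}\equiv u_{\alpha}^{6}\equiv u_{\beta}^{2}\equiv 1$) gives $u_{\alpha}=u_{\beta}=1$, so the generators of $N$ commute with $\tau$. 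You instead observe that conjugation gives a homomorphism $N\to{\rm Aut}(\langle\tau\rangle)\cong(\mathbb{Z}/k\mathbb{Z})^{\times}$ whose kernel contains the abelian subgroup $\langle\tau\rangle$, hence a homomorphism $\mathcal{A}_{5}\to(\mathbb{Z}/k\mathbb{Z})^{\times}$ into an abelian group, which must be trivial because $\mathcal{A}_{5}$ is nonabelian simple (equivalently, perfect); thus $u_{\eta}=1$ for every $\eta\in N$. In essence both proofs show the same thing --- the map $e\mapsto u_{\eta}$ into an abelian group kills $\overline{N}$ --- but the paper verifies this by hand from the $(5,2,3)$ relations (which is self-contained and, incidentally, exactly the statement that the abelianization of the $(2,3,5)$ triangle group is trivial), while your version is shorter, conceptual, and generalizes verbatim to any perfect $\overline{N}$. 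Your first two paragraphs (the attempt via fixed points of elements of $\overline{N}$ in ${\mathcal B}$, Lemma \ref{fijado}) are dispensable and contain an overreach --- when only one of $s_{0},s_{1},s_{2}$ is nonzero, only the elements of the corresponding order have fixed points in ${\mathcal B}$, so that route cannot cover all elements --- but since your closing argument is uniform and does not rely on them, the proof stands.
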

\begin{proof}
Let $\bar{N}=\langle a,b: a^{5}=b^{2}=(ab)^{3}=1\rangle$. Let $\alpha,\beta \in N$ be such that $\theta(\alpha)=a$ and $\theta(\beta)=b$.
Then, $u_{\alpha}^{5} \equiv 1 \mod(k)$, $u_{\beta}^{2} \equiv 1 \mod(k)$, and $(u_{\alpha}u_{\beta})^{3} \equiv 1 \mod(k)$.
The last asserts that $u_{\alpha}^{3} \equiv u_{\beta} \mod(k)$, so $u_{\alpha} \equiv u_{\alpha}^{6} \equiv u_{\beta}^{2} \equiv 1 \mod(k)$. It follows that $u_{\alpha}=u_{\beta}=1$, i.e., $\alpha$ and $\beta$ conmute with $\tau$.
\end{proof}

\section{The algebraic descriptions of $S$ in terms of $\bar{N}$: case $\tau$ tame}\label{Sec:ciclico}
In this section, we assume $\tau$ to be a tame $k$-gonal automorphism and $S$ to be described by an algebraic curve as in \eqref{curva}. As previously noticed, when $n \geq 2$ is even, the only possibilities for $\bar{N}$ are the cyclic and the dihedral groups. 
Below, we provide such an algebraic description for $S$, in terms of $\bar{N}$, reflecting the action of $N$. As already mentioned, 
this has been done in \cite{Wootton1} for $k=p$ a prime integer (see \cite[Theorem 7.5.]{Wootton1}). As, for $\bar{N} \cong {\mathcal A}_{5}$, $\tau$ is necessarily central in $N$ (see Lemma \ref{casoA5}), these equations were obtained in \cite{IY,SS,Wootton1}, so we will not take care of this one.

Let us recall the natural short exact sequence
$1 \to \langle \tau \rangle \to N \stackrel{\theta}{\to} \bar{N} \to 1,$
and the fact that, for $\eta \in N$, and $e=\theta(\eta)$, one has that $\eta(x,y)=(e(x),Q_{e}(x)y^{u_{\eta}})$, where $u_{\eta} \in \{1,\ldots,k-1\}$ satisfies that $\eta \circ \tau \circ \eta^{-1}=\tau^{u_{\eta}}$ (so
$u_{\eta}^{m} \equiv 1 \mod(k)$, where $m$ is the order of $e$), and the rational map $Q_{e}(x)$ can be explicitly computed from the form of $f(x)$ (see \eqref{formaQ}); this is explicitly given in \eqref{formaQ1} below.

\subsection{A computational remark}
The following result, which will be frequently used for computations of curves and automorphisms, allows us to simplify the determination of the integers $l_{j}$ in \eqref{curva}. In the case that $k=p$ is a prime integer, this was obtained in \cite[Lemma 7.4.]{Wootton1}, whose proof used the theory of Fuchsian groups. As our arguments are algebraic, they can be used for curves defined over any algebraically closed field of any characteristic as long $k$ is relatively prime to the characteristic (in the positive characteristic situation). 

\begin{lemm}\label{observabien}
Let $\tau$ be tame $k$-gonal automorphism, $\eta \in N$, different from the identity, and $m \geq 2$ be the order of $a=\theta(\eta) \in \bar{N}$. 
Let $s_{1}=t_{i_{1}}, \ldots, s_{r}=t_{i_{r}}$ (where $r \in \{1,m\}$) be a full $a$-orbit of points of ${\mathcal B}$, say $a(s_{j})=s_{j+1}$ (for $j=1,\ldots,r-1$) and $a(s_{r})=s_{1}$. Then $l_{i_{j+1}} \equiv u_{\eta}^{j} l_{i_{1}} \mod(k)$ (for $j=1,\ldots,r-1$). 

\end{lemm}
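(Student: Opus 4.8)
The plan is to track how the automorphism $\eta$ transforms the defining polynomial $f(x)$ along the orbit, and to read off the exponents from the factorization of $Q_{e}(x)^{k}$ given in Lemma \ref{lemaHQS}. Write $a = \theta(\eta)$, $u = u_{\eta}$, and recall from \eqref{formaQ} that
\begin{equation*}
Q_{e}(x)^{k} = \frac{f(a(x))}{f(x)^{u}} = \frac{\prod_{j}(a(x)-t_{j})^{l_{j}}}{\prod_{j}(x-t_{j})^{l_{j}u}}.
\end{equation*}
Since $\eta$ is an automorphism of $S$, the left-hand side is the $k$-th power of a rational function, so the divisor of $f(a(x))/f(x)^{u}$ on $\widehat{\mathbb C}$ must be divisible by $k$. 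The point $s_{1} = t_{i_{1}}$ contributes to the numerator through the factor $(a(x)-t_{i_{1}})^{l_{i_{1}}}$: because $a(s_{2}) = s_{1}$, i.e. $s_{1}$ is the image under $a$ of $s_{2}$, the function $x \mapsto a(x) - t_{i_{1}}$ has a simple zero at $x = s_{2}$, contributing order $l_{i_{1}}$ to the order of $f(a(x))$ at $s_{2}$. Meanwhile $f(x)^{u}$ contributes order $l_{i_{2}}u$ at $s_{2}$ (since $s_{2} = t_{i_{2}} \in {\mathcal B}$ with exponent $l_{i_{2}}$, using that $\tau$ is tame so the $t_{j}$ are genuinely the branch points with these exponents). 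Hence the order of $Q_{e}(x)^{k}$ at $s_{2}$ equals $l_{i_{1}} - l_{i_{2}}u$, and divisibility by $k$ forces $l_{i_{2}} \equiv u\, l_{i_{1}} \bmod (k)$. This is the case $j = 1$.

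The general case $j = 1,\ldots,r-1$ follows by the same local analysis at $s_{j+1}$, or more cleanly by induction: applying the relation just proved with $s_{j}$ in place of $s_{1}$ (and noting $a(s_{j+1}) = s_{j+1}$... rather, $a(s_{j}) = s_{j+1}$, so one repeats the argument with the roles shifted) gives $l_{i_{j+1}} \equiv u\, l_{i_{j}} \bmod (k)$, and then iterating from $j$ down to $1$ yields $l_{i_{j+1}} \equiv u^{j} l_{i_{1}} \bmod (k)$. One must be a little careful when $s_{1} = \infty$ or when some $t_{j} = \infty$: in that case one either uses the convention of Remark \ref{Rem0}(1) to move the orbit into ${\mathbb C}$ by a preliminary Möbius change of coordinate (which does not alter the exponents $l_{j}$), or one works projectively and compares orders at the point at infinity directly. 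Either way the exponents are unchanged, so the congruence is unaffected.

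The main obstacle, and the place to be careful, is justifying rigorously that the local order computation is exactly as claimed — in particular that $x \mapsto a(x) - t_{i_{1}}$ has a simple zero precisely at $a^{-1}(s_{1}) = s_{2}$ and a simple pole exactly where $a$ has its pole, and that these poles do not interact with the points $s_{j}$ (which would happen only if $\infty$ lies in the orbit, handled as above). One should also explicitly record that, because $\eta$ has a well-defined action on $S$, the quantity $Q_{e}(x)$ is a genuine rational function and hence $f(a(x))/f(x)^{u}$ is a $k$-th power in ${\mathbb C}(x)$, which is what converts the order computation into the stated congruence; this is exactly the content of Lemma \ref{lemaHQS}, so it may simply be cited. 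Once the base case is nailed down, the induction is purely formal, using $u_{\eta}^{m} \equiv 1 \bmod (k)$ only to observe consistency when $j = r$ closes the orbit (giving $l_{i_{1}} \equiv u^{r} l_{i_{1}}$, automatically satisfied since $r \in \{1,m\}$ and $u^{m}\equiv 1$).
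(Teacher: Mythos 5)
Your overall strategy --- reading the congruences off from the fact that $f(a(x))/f(x)^{u_{\eta}}$ is the $k$-th power of a rational function (Lemma \ref{lemaHQS}), by comparing orders of vanishing at the orbit points --- is sound, and it is in the same spirit as the paper's proof, which expands $f(a(x))$ orbit by orbit and forces each factor $g_{j}(x)^{u_{\eta}-v_{j}}$ to be a $k$-th power. But your base case is wrong as written, in two places. First, under the lemma's convention $a(s_{j})=s_{j+1}$ one has $a(s_{2})=s_{3}$ and $a^{-1}(s_{1})=s_{r}$, so your premise ``$a(s_{2})=s_{1}$'' is false whenever $r>2$: the factor $(a(x)-t_{i_{1}})^{l_{i_{1}}}$ vanishes at $x=s_{r}$, not at $x=s_{2}$. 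Second, even granting your order count, the relation $l_{i_{1}}-u_{\eta}\,l_{i_{2}} \equiv 0 \mod(k)$ yields $l_{i_{2}} \equiv u_{\eta}^{-1} l_{i_{1}} \mod(k)$, not the claimed $l_{i_{2}} \equiv u_{\eta} l_{i_{1}} \mod(k)$; these agree only if $u_{\eta}^{2}\equiv 1 \mod(k)$, which is not part of the hypotheses (only $u_{\eta}^{m}\equiv 1 \mod(k)$ is). So the key step, as stated, is a non sequitur, even though the final congruence you assert is the correct one.

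The repair is immediate: compare orders at $x=s_{j}$ itself rather than at a preimage. Since $a$ is a M\"obius map, $(a(x)-t_{i})$ has a simple zero at $a^{-1}(t_{i})$, hence ${\rm ord}_{s_{j}}\, f(a(x)) = l_{i_{j+1}}$ (the exponent of $a(s_{j})=s_{j+1}$ in \eqref{curva}), while ${\rm ord}_{s_{j}}\, f(x)^{u_{\eta}} = u_{\eta}\, l_{i_{j}}$. Divisibility by $k$ of the divisor of $Q_{a}(x)^{k}$ then gives directly $l_{i_{j+1}} \equiv u_{\eta}\, l_{i_{j}} \mod(k)$ for each $j$, and iterating yields the lemma; your handling of the point at infinity and your observation that Lemma \ref{lemaHQS} guarantees $Q_{a}$ is a genuine rational function are fine and are exactly what is needed. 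Once corrected, your pointwise-order version is in fact a slightly cleaner local form of the paper's computation (which cancels the exponents $\hat{l}_{j}$ using tameness to conclude $v_{j}=u_{\eta}$); note that the wrap-around relation at $j=r$, namely $(u_{\eta}^{r}-1)l_{i_{1}}\equiv 0 \mod(k)$, is where tameness genuinely enters when one wants to conclude, e.g., that a fixed point of $a$ in ${\mathcal B}$ forces $u_{\eta}=1$.
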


\begin{rema}
If $a$ fixes a point in ${\mathcal B}$, then the above lemma ensures that $u_{\eta}=1$ (see Lemma \ref{fijado}).
\end{rema}

\begin{proof}
A full $a$-orbit in ${\mathcal B}$ is either: (i) of length one (if one of both fixed points of $a$ belongs to ${\mathcal B}$), or one of length $m$ (whose elements are cyclically permuted by $a$).

(1) Let us first consider a full $a$-orbit $s_{1}=t_{i_{1}}, \ldots, s_{r}=t_{i_{r}}$ (where $r \in \{1,m\}$) of points of ${\mathcal B} \cap {\mathbb C}$. We may assume that $a(s_{j})=s_{j+1}$ (for $j=1,\ldots,r-1$) and $a(s_{r})=s_{1}$. Let us denote by $l_{i_{j}}$ the exponent of $t_{i_{j}}$ in \eqref{curva}.
There is some $v \in \{1,\ldots,k-1\}$, such that $l_{i_{j+1}} \equiv v^{j} l_{i_{1}} \mod(k)$ (for $j=1,\ldots,r-1$), and $l_{1} \equiv v^{r} l_{1} \mod(k)$. In particular, $(v^{r}-1)l_{1} \equiv 0 \mod(k)$.
So, if $\tau$ is tame, then (as $\gcd(k,l_{1})=1$) $v$ satisfies that $v^{r} \equiv 1 \mod(k)$.
If we set $l_{i_{1}}=\hat{l}$, then one of the factors of $f(x)$ is of the form 
{\small
$$g(x)=(x-s_{1})^{\hat{l}}(x-s_{2})^{v\hat{l}}\cdots(x-s_{r})^{v^{r-1}\hat{l}}.$$ 
}
Since $a(x)-a(s)=(a'(x)a'(s))^{1/2}(x-s)$, if we set 
$\hat{\lambda}=a'(s_{r})^{\hat{l}/2} a'(s_{1})^{\hat{l}v/2} \cdots a'(s_{r-1})^{\hat{l}v^{r-1}/2}$, then 
{\small
$$g(a(x))=
(a(x)-s_{1})^{\hat{l}}(a(x)-s_{2})^{v\hat{l}}\cdots(a(x)x-s_{r})^{v^{r-1}\hat{l}}=$$
$$(a(x)-a(s_{r}))^{\hat{l}}(a(x)-a(s_{1}))^{v\hat{l}}\cdots(a(x)x-a(s_{r-1}))^{v^{r-1}\hat{l}}=
$$
$$
=\hat{\lambda} a'(x)^{\frac{\hat{l}(1+v+\cdots+v^{r-1})}{2}}(x-s_{r})^{\hat{l}}(x-s_{1})^{v\hat{l}}\cdots(x-s_{r-1})^{v^{r-1}\hat{l}} =$$
$$
=\hat{\lambda} a'(x)^{\frac{\hat{l}(1+v+\cdots+v^{r-1})}{2}} \frac{(x-s_{r})^{v^{r}\hat{l}}(x-s_{1})^{v \hat{l}}\cdots(x-s_{r-1})^{v^{r-1}\hat{l}}}{(x-s_{r})^{(v^{r}-1)\hat{l}}}=
\hat{\lambda} a'(x)^{\frac{\hat{l}(1+v+\cdots+v^{r-1})}{2}} \frac{g(x)^{v}}{(x-s_{r})^{(v^{r}-1)\hat{l}}}.
$$
}

(2) Let us consider a maximal subset $\{\hat{s}_{r_{1}},\ldots, \hat{s}_{r_{q}}\} \subset {\mathcal B}$, where they are in different $a$-orbits. So, by the above, there exist 
\begin{enumerate}[leftmargin=15pt]
\item $r_{1}, \ldots,r_{q} \in \{1,m\}$, and at most two of them can be equal to $1$, 
\item $\hat{l}_{1},\ldots,\hat{l}_{q} \in \{1,\ldots,k-1\}$ (these are some of the exponents in \eqref{curva}), 
\item $v_{1},\ldots,v_{q} \in \{1,\ldots, k-1\}$ such that $(v_{j}^{r_{j}}-1)\hat{l}_{j} \equiv 0 \mod(k)$. So, if $\tau$ is tame, then $v_{j}^{r_{j}} \equiv 1 \mod(k)$,
 and
 \item $g_{1}(x),\ldots,g_{q}(x) \in {\mathbb C}[x]$, where each $g_{j}(x)$ is a factor coming from the $a$-orbit of $\hat{s}_{j}$,
\end{enumerate}
such that 
$f(x)=g_{1}(x) \cdots g_{q}(x)$, and 
{\small
$$
f(a(x))=\hat{\lambda} 
a'(x)^{\frac{\hat{l}_{1}(1+v_{1}+\cdots+v_{1}^{r_{1}-1})+\cdots+\hat{l}_{q}(1+v_{q}+\cdots+v_{q}^{r_{q}-1})}{2}} 
\frac{g_{1}(x)^{v_{1}} \cdots g_{q}(x)^{v_{m}}}{(x-s_{r_{1}})^{(v_{1}^{r_{1}}-1){\hat{l}_{1}} } \cdots(x-s_{r_{q}})^{(v_{q}^{r_{q}}-1){\hat{l}_{q}}}},
$$}where $\hat{\lambda}=\hat{\lambda}_{1}+\cdots+\hat{\lambda}_{q}$ is a non-zero constant. 
Note that the exponents of $a'(x)$ and of $(x-s_{j})$ are integer multiples of $k$. We know that $\eta(x,y)=(a(x), Q_{a}(x) y^{u_{\eta}})$, where
{\small
$$Q_{a}(x)^{k}=\frac{f(a(x))}{f(x)^{u_{\eta}}}=\left(\frac{\hat{\lambda} 
a'(x)^{\frac{\hat{l}_{1}(1+v_{1}+\cdots+v_{1}^{r_{1}-1})+\cdots+\hat{l}_{q}(1+v_{q}+\cdots+v_{q}^{r_{q}-1})}{2}} }{(x-s_{r_{1}})^{(v_{1}^{r_{1}}-1){\hat{l}_{1}} } \cdots(x-s_{r_{q}})^{(v_{q}^{r_{q}}-1){\hat{l}_{q}}}}\right)
\frac{1}{ g_{1}(x)^{u_{\eta}-v_{1}} \cdots g_{q}(x)^{u_{\eta}-v_{m}}}.$$
}

The above asserts that 
$g_{1}(x)^{u_{\eta}-v_{1}} \cdots g_{q}(x)^{u_{\eta}-v_{m}}$ is the $k$-power of a polynomial. This implies that
$(u_{\eta}-v_{j})\hat{l}_{j} \equiv  0 \mod(k)$. So, if we assume $\tau$ to be tame, then $u_{\eta}-v_{j} \equiv  0 \mod(k)$, and therefore we must have that $v_{1}=\cdots=v_{q}=u_{\eta}$, and 
\begin{equation}\label{formaQ1}
{\small Q_{a}(x)=\frac{\hat{\lambda}^{1/k} a'(x)^{\frac{\hat{l}_{1}(1+u_{\eta}+\cdots+u_{\eta}^{r_{1}-1})+\cdots+\hat{l}_{q}(1+u_{\eta}+\cdots+u_{\eta}^{r_{q}-1})}{2k}}}
{(x-s_{r_{1}})^{\frac{(u_{\eta}^{r_{1}}-1){\hat{l}_{1}}}{k} } \cdots(x-s_{r_{q}})^{\frac{(u_{\eta}^{r_{q}}-1){\hat{l}_{q}}}{k}}}.}
\end{equation}

In particular, if $a$ fixes one of the elements of ${\mathcal B}$ (so $r=1$ in this case) and $\tau$ is tame, then $u_{\eta}=v_{1}=1$ (see part (1) in Remark \ref{deuso}).
\end{proof}

\begin{rema}
In the above lemma, the hypothesis for $\tau$ to be tame is necessary. For example, let us consider the curve
{\small $y^{4}=x^{2}(x^{2}-1)(x^{2}+1)^{3},$}
for which $\tau$ is not tame. This curve admits the automorphism
$\eta \in N$ defined by 
$\eta(x,y)=\left(ix, \frac{\omega_{8} y^{3}}{x(x^{2}+1)^{2} } \right).$ In this case, $u_{\eta}=3$ (since $\eta \tau \eta^{-1}=\tau^{3}$), and $a(x)=\theta(\eta)(x)=ix$.
There are three full $a$-orbits in ${\mathcal B}=\{\infty,0,\pm1, \pm i\}$, given by $\{\infty\}$, $\{0\}$ and $\{1,i,-1,-i\}$. We note that $a(0)=0$, but $u_{\eta} \neq 1$.

\end{rema}

\subsection{The case when $\bar{N}$ contains ${\mathbb Z}_{m}$}
Let us consider the case when $\bar{N}$ contains ${\mathbb Z}_{m}$, where $m \geq 2$. Note that $n+1=s_{0}+ml$, where $s_{0} \in \{0,1,2\}$ and $l \geq 1$, and therefore there are three possibilities in this case. We may assume ${\mathbb Z}_{m}$ is generated by $a(x)=\omega_{m}x$ and that the cone points are given by $\omega_{m}^{j}, \omega_{m}^{j}\lambda_{1}, \ldots, \omega_{m}^{j}\lambda_{l-1}$, where $j \in \{0,\ldots,m-1\}$, $\lambda_{i}^{m} \neq 0,1$ and $\lambda_{i_{1}}^{m} \neq \lambda_{i_{2}}^{m}$ for $i_{1} \neq i_{2}$, and perhaps $0$ and/or $\infty$.  
In this case, if $\alpha \in N$ is such that $\theta(\alpha)=a$, then $\alpha(x,y)=(\omega_{m}x, Q(x)y^{u})$, $u=u_{\alpha} \in \{1,\ldots,k-1\}$, $\gcd(k,u)=1$ and $u^{m} \equiv 1 \mod(k)$. In particular, $\alpha \tau \alpha^{-1}=\tau^{u}$.

\subsubsection{\bf The case $s_{0}=0$}
If $ml=n+1$ and $S/\theta^{-1}({\mathbb Z}_{m})$ of signature $(0;k,\stackrel{l}{\ldots},k ,m,m)$. In this case, 
{\small 
$$S: y^{k}=\prod_{j=0}^{m-1}(x-\omega_{m}^{j})^{u^{j}} \prod_{i=1}^{l-1} \left( \prod_{j=0}^{m-1}(x-\omega_{m}^{j}\lambda_{i})^{u^{j}}  \right)^{t_{i}},$$
}
where $t_{i} \in \{1,\ldots,k-1\}$, $\gcd(k,t_{i})=1$, and $(1+u+u^{2}+\cdots+u^{m-1})(1+t_{1}+\cdots+t_{l-1}) \equiv 0 \mod(k)$, and we may assume 
{\small
$$\alpha(x,y)=\left( \omega_{m}x,\frac{\omega_{m}^{\frac{(1+u+\cdots+u^{m-1})(1+t_{1}+\cdots+t_{l-1})}{k}} y^{u} }
{\left( (x-\omega_{m}^{m-1})  \prod_{i=1}^{l-1}(x-\omega_{m}^{m-1}\lambda_{i})\right)^{\frac{u^{m}-1}{k}} }\right).$$
}

Note that if $\alpha^{m} \neq 1$, then $\hat{\alpha}=\alpha \tau^{1+t_{1}+\cdots+t_{l-1}}$ satisfies that $\hat{\alpha}^{m}=1$. In this situation, $N$ contains the subgroup
$$\langle \tau, \alpha: \tau^{k}=\alpha^{m}=1, \alpha \tau \alpha^{-1}=\tau^{u}\rangle=\langle \tau \rangle \rtimes \langle \alpha \rangle \cong {\mathbb Z}_{k} \rtimes {\mathbb Z}_{m}.$$

\begin{rema}
For $u=1$, the above equations were provided in \cite[Table 4]{SS}. In the generic situation, $N=\langle \tau, \alpha \rangle$.
\end{rema}

\begin{rema}
In the above, $\alpha$ commutes with $\tau$ if and only if $m(1+t_{1}+\cdots+t_{l-1}) \equiv 0 \mod(k)$, in which case, 
{\small $S: y^{k}=(x^{m}-1)\prod_{i=1}^{l-1}(x^{m}-\lambda_{i}^{m})^{t_{i}}$} and $\alpha(x,y)=(\omega_{m}x, y).$
In the particular case when $k=p$ is a prime integer, this corresponds to having either (i) $m=p$  or (ii) $m \neq p$ and $1+t_{1}+\cdots+t_{l-1} \equiv 0 \mod(p)$.
\end{rema}

\begin{rema}
Note that for some particular choices of $n$ and $k$, there will be situations for which $\bar{N}$ equals ${\mathbb Z}_{m}$ and others for which $\bar{N}$ contains strictly ${\mathbb Z}_{m}$ 
(usually it is a dihedral group). Let us consider a couple of cases.
\begin{enumerate}[leftmargin=15pt]
\item If $k=7$, $l=1$, $u=2$, $n=2$ and $m=3$, then 
{\small $S: y^{7}=(x-1)(x-\omega_{3})^{2}(x-\omega_{3}^{2})^{4},$} $\bar{N}=\langle a(x)=\omega_{3}x \rangle \cong {\mathbb Z}_{3}$, and 
$N=\left< \tau(x,y)=(x,\omega_{7}y),\;\alpha(x,y)=\left(\omega_{3}x, \frac{\omega_{3}y^{2}}{x-\omega_{3}^{2}}\right)\right>.$ In this case, $\alpha \tau \alpha^{-1}=\tau^{2}$.

\item If $m=n+1 \equiv 0 \mod(k)$, $l=1$ and $u=1$, then   
{\small $S: y^{k}=x^{n+1}-1,$ }
$\bar{N}=\left< a(x)=\omega_{n+1}x,b(x)=\frac{1}{x}\right> \cong {\mathbb D}_{n+1},$ and 
$N=\left< \tau(x,y)=(x,\omega_{k}y),\; \alpha(x,y)=(\omega_{n+1}x,y),\; \beta(x,y)=\left(\frac{1}{x}, \frac{(-1)^{1/k}y}{x^{l}}\right)\right>.$ In this case, $\alpha$ and $\beta$ commute with $\tau$.
\end{enumerate}

\end{rema}

\subsubsection{\bf The case $s_{0}=1$} If $ml=n$ and $S/\theta^{-1}({\mathbb Z}_{m})$ of signature $(0,k,\stackrel{l}{\dots},k,m,mk)$.
As $a$ fixes an element of ${\mathcal B}$, we have that $u=1$, that is, $\alpha \tau \alpha^{-1}=\tau$.
In this case, 
{\small
$$S: y^{k}=x (x^{m}-1)^{t_{1}} \prod_{i=1}^{l-1} (x^{m}-\lambda_{i}^{m})^{t_{i+1}} ,$$
}
where $t_{i} \in \{1,\ldots,k-1\}$, $\gcd(k,t_{i})=1$ and $1+m(t_{1}+\cdots+t_{l}) \equiv 0 \mod(k),$
and may assume
{\small
$$\alpha(x,y)=\left( \omega_{m}x,\omega_{m}^{\frac{1}{k}} y\right).$$
}

In this case, $\alpha^{m}=\tau$. Note that $\hat{\alpha}=\alpha \tau^{t_{1}+\cdots+t_{l}}$ satisfies that $\hat{\alpha}^{m}=1$. The group $N$ contains the subgroup
$$\langle \tau, \alpha\rangle =\langle \alpha \rangle \cong {\mathbb Z}_{km}.$$

\begin{rema}
As $u=1$, the above equations were provided in \cite[Table 4]{SS}. In the generic situation, $N=\langle \tau, \alpha \rangle$. 
\end{rema}

\begin{rema}
Again, there will be many cases for which $\bar{N}={\mathbb Z}_{m}$, but also some cases for which $\bar{N}$ strictly contains ${\mathbb Z}_{m}$. For instance, 
if we take $k=4$ and $l=t_{1}=1$, then {\small $S: y^{4}=x(x^{3}-1),$} and 
$$\bar{N}=\left< a(x)=\omega_{3}x, b(x)=\frac{\omega_{3}(1-x)}{1+(-1+\omega_{3}-\omega_{3}^{2})x}\right> \cong {\mathcal A}_{4}.$$
\end{rema}

\subsubsection{\bf The case $s_{0}=2$} If $ml=n-1$ and $S/\theta^{-1}({\mathbb Z}_{m})$ of signature $(0;k,\stackrel{l}{\ldots},k,mk,mk)$.
Again as $a$ fixes an element of ${\mathcal B}$, $u=1$ and $\alpha \tau \alpha^{-1}=\tau$.
In this case, 
{\small
$$S: y^{k}=x (x^{m}-1)^{t_{1}} \prod_{i=1}^{l-1} (x^{m}-\lambda_{i}^{m})^{t_{i+1}} ,$$
}
where $t_{i} \in \{1,\ldots,k-1\}$, $\gcd(k,t_{i})=1$ and $\gcd(k,1+m(t_{1}+\cdots+t_{l}))=1$, and we may assume 
{\small
$$\alpha(x,y)=\left( \omega_{m}x,\omega_{m}^{\frac{1}{k}} y\right).$$
}

In this case, $\alpha^{m}=\tau$. For every integer $s \geq 0$ it holds that $(\alpha \tau^{s})^{m}=\tau^{1+sm}$. So, if there is no $s$ such that $1+sm \equiv 0 \mod(k)$, then we cannot find another choice for $\alpha$ of order $m$. In any case, the group $N$ contains the subgroup 
$$\langle \tau, \alpha\rangle = \langle \alpha \rangle \cong {\mathbb Z}_{km}.$$

\begin{rema}
Again, as $u=1$, the above equations were also provided in \cite[Table 4]{SS}. In the generic situation, $N=\langle \tau, \alpha \rangle$. 
\end{rema}

\begin{rema}
Let us observe that if $l=1$, i.e., $m=n-1$, and $2+(n-1)t \equiv 0 \mod(k),$ then the curve also admits the extra automorphism 
$\beta(x,y)=\left(\frac{1}{x}, \frac{(-1)^{1/k}y}{x^{\frac{2+(n-1)t}{k}}}\right) \in N.$
In this case, $$\bar{N}=\langle a(x)=\omega_{n-1}x, b(x)=1/x \rangle= {\mathbb D}_{n-1}.$$
\end{rema}

\begin{rema}
The above computations permit us to observe that, if $\eta \in N$ is such that $\theta(\eta)$ does not fix a point in ${\mathcal B}$, then we can assume $\eta$ and $\theta(\eta)$ to have the same order (see \cite[Theorem 9]{K}).
\end{rema}

\subsection{The case when $\bar{N}$ contains ${\mathbb D}_{m}$}
Let us now assume $\bar{N}$ contains ${\mathbb D}_{m}$, where $m \geq 2$. We may assume 
${\mathbb D}_{m}=\langle a(x)=\omega_{m}x, b(x)=1/x \rangle$.
If $\alpha, \beta \in N$ are such that $\theta(\alpha)=a$ and $\theta(\beta)=b$, then (taking $u=u_{\alpha}$ and $v=u_{\beta}$)
{\small
$$\alpha(x,y)=\left(\omega_{m}x, Q_{1}(x) y^{u}\right), \; \beta(x,y)=\left(\frac{1}{x},Q_{2}(x) y^{v} \right), \mbox{ where }Q_{1}(x)=\left(\frac{f(\omega_{m}x)}{f(x)^{u}}  \right)^{1/k}, \;   Q_{2}(x)=\left(\frac{f(1/x)}{f(x)^{v}}  \right)^{1/k}.$$
}

So, $u^{m} \equiv 1 \mod(k)$ and $v^{2} \equiv 1 \mod(k)$.
Since $(ab)^{2}=1$, we must also have that $(uv)^{2} \equiv 1 \mod(k)$, that is, $u^{2}=1$. In particular, $u^{\gcd(2,m)} \equiv 1 \mod(k)$. In this case, $S$ is represented by a curve of the form
{\small
$$S: y^{k}=f(x)=x^{s_{0}} \left(\prod_{j=0}^{m-1}(x-\omega_{m}^{j})^{u^{j}}\right)^{s_{1}}
\left(\prod_{j=0}^{m-1}(x-\omega_{m}^{j} \omega_{2m})^{u^{j}}\right)^{s_{2}} 
\prod_{i=1}^{l} \left(\prod_{j=0}^{m-1}(x-\omega_{m}^{j} \lambda_{i})^{u^{j}}  (x-\omega_{m}^{j} \lambda_{i}^{-1})^{vu^{j}}\right)^{r_{i}}
,$$
}
where 
\begin{enumerate}[leftmargin=15pt]
\item $l \geq 0$,
\item $u,v \in \{1,\ldots,k-1\}$, $u^{m} \equiv u^{2} \equiv v^{2} \equiv 1 \mod(k)$, 
\item $s_{0} \in \{0,1\}$, and if $s_{0}=1$, then $u=1$,
\item $s_{1},s_{2} \in \{0,1,\ldots,k-1\}$, and if some $s_{j} \neq 0$, then $\gcd(k,s_{j})=1$ and $v=1$,
\item if $l \geq 1$, then $r_{i} \in \{1,\ldots,k-1\}$, and $\gcd(k,r_{i})=1$, $\lambda_{i}^{k} \neq 0,-1,1$, and $\lambda_{i_{1}}^{k} \neq \lambda_{i_{2}}^{\pm k}$,
\item $(1+v)s_{0}+(s_{1}+s_{2}+(1+v)\sum_{i=1}^{l}r_{i})\sum_{j=1}^{m-1}u^{j} \equiv 0 \mod(k).$
\end{enumerate}

\begin{rema}
If $u=v=1$, then the above equations were also provided in \cite[Table 4]{SS}. In the generic situation, $N=\langle \tau, \alpha, \beta \rangle$.  
\end{rema}

\begin{exem}
For instance, if $s_{0}=s_{1}=s_{2}=0$, then
{\small
$$Q_{1}(x)=\frac{\omega_{m}^{\frac{(1+v) \left(\sum_{i=1}^{l}r_{i}\right) \left(\sum_{j=0}^{m-1}u^{j}\right)}{k}}}
{ \left(\prod_{i=1}^{l}(x-\omega_{m}^{m-1} \lambda_{i})^{r_{i}}(x-\omega_{m}^{m-1}\lambda_{i}^{-1})^{r_{i}}\right)^{\frac{u^{m}-1}{k}} },$$
$$Q_{2}(x)=
\frac{(-1)^{\frac{(1+v)\left(\sum_{i=1}^{l}r_{i}\right)\left(\sum_{j=0}^{m-1}u^{j}\right)}{k} } \omega_{m}^{\frac{(1+v) \left(\sum_{i=1}^{l}r_{i}\right) \left(\sum_{j=0}^{m-1}ju^{j}\right)}{k}}
\prod_{i=1}^{l} \lambda_{i}^{\frac{(1-v) \sum_{j=0}^{m-1}u^{j}}{k}}}
{x^{\frac{(1+v)\left(\sum_{i=1}^{l}r_{i}\right) \left(\sum_{j=0}^{m-1}u^{j}  \right)}{k}}
\prod_{i=1}^{l}\left(\prod_{j=0}^{m-1}(x-\omega_{m}^{m-j}\lambda_{i})^{\frac{vu^{j}(u^{m-2j}-1)}{k}} (x-\omega_{m}^{m-j}\lambda_{i}^{-1})^{\frac{u^{m-j}(v^{2}-1)}{k}}  \right)
}
$$
}
\end{exem}

\begin{exem}[Maximal dihedral]\label{maxdihe}
Assume $m=n+1$, where $n \geq 2$ is even. In this case, $g=(k-1)(n-1)/2$, so $k \geq 3$ must be odd, and $u=1$. In this case, $l=0$, $s_{0}=s_{2}=0$, and may assume $s_{1}=1$. As $b$ fixes $1 \in {\mathcal B}$, it also follows that $v=1$.
So, $S: y^{k}=x^{n+1}-1,$
where $k/(n+1)$. For example, $n=2$ is not possible; as in such a case, $k=3$ and $g=1$, a contradiction.
\end{exem}

\begin{rema}
If $m \geq 3$ is odd, then $u=1$, that is, $\alpha$ commutes with $\tau$, and 
{\small $S: y^{k}=x^{s_{0}}(x^{m}-1)^{s_{1}}(x^{m}+1)^{s_{2}} \prod_{i=1}^{l}\left((x^{m}-\lambda_{i}^{m})(x^{m}-\lambda_{i}^{-m})^{v} \right)^{r_{i}}.$}
In particular, if $k=p \geq 3$ is a prime integer, then $u=1$ and $v \in \{1,p-1\}$.
\end{rema}

\subsection{The case when $\bar{N}$ contains ${\mathcal A}_{4}$}
We may assume ${\mathcal A}_{4}=\langle a(x)=\frac{i-x}{i+x}, b(x)=-x\rangle$.
Note that ${\rm Fix}(a)=\{\mu_{1},\mu_{2}\}$, where $\mu_{j}$ is solution of $\mu^{2}+(1+i)\mu-i=0$.
In this case, we know that $u=u_{\alpha}$ is such that $u^{3} \equiv 1 \mod(k)$,  
{\small
$$\alpha(x,y)=\left(\frac{i-x}{i+x}, Q_{\alpha}(x) y^{u} \right), \; \beta(x,y)=(-x,Q_{\beta}(x) y),$$
}
where $Q_{\alpha}(x), Q_{\beta}(x)$ are explicitly defined by \eqref{formaQ} and \eqref{formaQ1}, and 
{\small
$$
S: y^{k}=\left(\prod_{j=1}^{2}\left((x^{2}-\mu_{j}^{2})(x-a(-\mu_{j}))(x-a^{2}(-\mu_{j})) \right)^{s_{j}}\right)  \cdot  \left(x\left(x^{2}-1\right)^{u} \left(x^{2}+1 \right)^{u^{2}}   \right)^{s_{3}}\cdot
$$
$$
\cdot \prod_{i=1}^{l} \left(\left(x^{2}-\lambda_{i}^{2}\right) \left(x^{2}-aba^{2}(\lambda_{i})^{2}\right)  \left(x^{2}-a(\lambda_{i})^{2}\right)^{u} 
\left(x^{2}-a(-\lambda_{i})^{2}\right)^{u}  \left(x^{2}-a^{2}(\lambda_{i})^{2}\right)^{u^{2}} \left(x^{2}-a^{2}(-\lambda_{i})^{2}\right)^{u^{2}}  \right)^{r_{i}},
$$
}
where 
\begin{enumerate}[leftmargin=15pt]
\item $s_{j} \in \{0,1,\ldots,k-1\}$, and if $s_{j} \neq 0$, then $\gcd(k,s_{j})=1$;
\item if either $s_{1} \neq 0$ or $s_{2} \neq 0$, then $u=1$;
\item $\lambda_{i} \in {\mathbb C}\setminus \{ 0,\pm 1, \pm i, \pm \mu_{1}, \pm \mu_{2}, a(\pm\mu_{1}), a^{2}(\pm \mu_{1}), a(\pm\mu_{2}), a^{2}(\pm \mu_{2}) \}$;
\item the values $\lambda_{i}$ are in different ${\mathcal A}_{4}$-orbits;
\item if $\delta=4(s_{1}+s_{2})+(1+u+u^{2})\left(s_{3}+4\sum_{i=1}^{l}r_{i}\right)$, then 
\begin{enumerate}
\item if $s_{3} \neq 0$, then $\gcd(k,\delta)=1$; and 
\item of $s_{3}=0$, then $\delta \equiv 0 \mod(k)$.
\end{enumerate}

\end{enumerate}

\begin{rema}
If $u=1$, then the above equations were also provided in \cite[Table 4]{SS}. In the generic situation, $N=\langle \tau, \alpha, \beta \rangle$.  
\end{rema}

\subsection{The case when $\bar{N}$ contains ${\mathfrak S}_{4}$}
We may assume ${\mathfrak S}_{4}=\langle a(x)=ix, b(x)=\frac{1-x}{1+x}\rangle$. Note that ${\rm Fix}(b)=\{\mu_{1}=-1+\sqrt{2},\mu_{2}=-1-\sqrt{2}\}$, and 
${\rm Fix}(ab)=\{\mu_{3},\mu_{4}\}$, where $\mu_{3}$ and $\mu_{4}$ are the solutions of $\mu^{2}+(1-i)\mu+i=0$.
In this case, we know that $u=u_{\alpha}=u_{\beta}$ is such that $u^{2} \equiv 1 \mod(k)$, 
{\small
$$\alpha(x,y)=\left(ix, Q_{\alpha}(x) y^{u} \right), \; \beta(x,y)=\left(\frac{1-x}{1+x}, Q_{\alpha}(x) y^{u} \right),$$
}
where $Q_{\alpha}(x), Q_{\beta}(x)$ are explicitly defined by \eqref{formaQ} and \eqref{formaQ1}, and 
{\small
$$
S: y^{k}=\left(x(x^{4}-1)\right)^{s_{1}} \left((x^{4}-\mu_{1}^{4})(x^{4}-\mu_{2}^{4})(x^{4}-b(i\mu_{1})^{4})  \right)^{s_{2}}
\left( (x^{2}-\mu_{3}^{2}) (x^{2}-\mu_{4}^{2})   (x^{2}+\mu_{3}^{2})^{u} (x^{2}+\mu_{4}^{2})^{u}   \right)^{s_{3}}\cdot
$$
$$
\cdot \prod_{i=1}^{l} \left( 
(x^{2}-\lambda_{i}^{2})(x^{2}+\lambda_{i}^{2})^{u}            
(x^{2}+b(\lambda_{i})^{2})(x^{2}-b(\lambda_{i})^{2})^{u} 
(x^{2}-((ab)^{2}(\lambda_{i}))^{2})(x^{2}+((ab)^{2}(\lambda_{i}))^{2})^{u} \cdot
\right.
$$
$$
\left.
(x^{2}+(aba(\lambda_{i}))^{2})(x^{2}-(aba(\lambda_{i}))^{2})^{u}
(x^{2}+(ab(-aba(\lambda_{i})))^{2})(x^{2}-(ab(-aba(\lambda_{i})))^{2})^{u}
\right.
$$
$$
\left.
(x^{2}+((ab)^{2}(-aba(\lambda_{i})))^{2})(x^{2}-((ab)^{2}(-aba(\lambda_{i})))^{2})^{u}
 \right)^{r_{i}},
$$
}
where 
\begin{enumerate}[leftmargin=15pt]
\item $s_{j} \in \{0,1,\ldots,k-1\}$, and if $s_{j} \neq 0$, then $\gcd(k,s_{j})=1$;
\item if either $s_{1} \neq 0$ or $s_{2} \neq 0$, then $u=1$;
\item $\lambda_{i} \in {\mathbb C}\setminus \{ 0,\pm 1, \pm i, \pm \mu_{j}, \pm i\mu_{j}, \pm b(\pm i\mu_{1}) \}$;
\item the values $\lambda_{i}$ are in different ${\mathfrak S}_{4}$-orbits;
\item if $\delta=5s_{1}+8s_{2}+4(1+u)s_{3}+12(1+u)\sum_{i=1}^{l}r_{i}$, then 
\begin{enumerate}
\item if $s_{1} \neq 0$, then $\gcd(k,\delta)=1$; and 
\item of $s_{1}=0$, then $\delta \equiv 0 \mod(k)$.
\end{enumerate}

\end{enumerate}

\begin{rema}
If $u=1$, then the above equations were also provided in \cite[Table 4]{SS}. In the generic situation, $N=\langle \tau, \alpha, \beta \rangle$.  
\end{rema}

\subsection{The case when $\bar{N}$ contains ${\mathcal A}_{5}$}
Let us now assume $\bar{N}$ contains ${\mathcal A}_{5}$. We may assume 
${\mathcal A}_{5}=\langle a(x)=\omega_{5}x, b(x)=-(x+\lambda_{0})/(\lambda_{0}x-1) \rangle$, where $\lambda_{0}=-i\omega_{5}(1+\omega_{5}^{3})$.
If $\alpha,\beta \in N$ are such that $\theta(\alpha)=a$ and $\theta(\beta)=b$, then (by Lemma \ref{casoA5})
$\alpha(x,y)=(\omega_{5}x, Q_{1}(x)y)$, $\beta(x,y)=(b(x),Q_{2}(x)y)$; so $\tau$ is central in $\langle \tau, \alpha, \beta \rangle$. In this case, an algebraic model for $S$ is as described in \cite[lines $\#$ 24-31 in Table4]{SS},

\section{Example: $n \in \{2,3,4\}$ and $\tau$ tame}\label{Sec:234}
We proceed to explicitly describe the above, when $\bar{N}$ is non-trivial,  $n \in \{2,3,4\}$ and $\tau$ is assumed to be tame (see Table \ref{tabla1}).

\begin{center}
{\small
\begin{table}[h]\label{tabla1}
\begin{tabular}{|c|c|c|||c|c|c|}\hline $n$ & $\bar{N}$ & $S/N$ & $n$ & $\bar{N}$ & $S/N$  \\\hline\hline 
2 & ${\mathbb Z}_{2}$ & $(0;2,k,2k)$, $k \geq 5$ odd  & 
2 & ${\mathbb Z}_{3}$ & $(0;3,3,k)$, $k \geq 5$ odd    \\\hline
2 & ${\mathbb D}_{3}$ &  NO &
3 & ${\mathbb Z}_{2}$ & $(0;2,k,k,2k)$, $k \geq 3$  \\\hline 
3 & ${\mathbb Z}_{3}$ & $(0;3,k,3k)$, $k \geq 3$   &
3 & ${\mathbb Z}_{4}$ & $(0;2,4,2k)$, $k \geq 3$   \\\hline
3 & ${\mathbb Z}_{2}^{2}$ & $(0;2,2,2,4)$, $k=4$  & 
3 & ${\mathbb Z}_{2}^{2}$ & $(0;2,2k,2k)$, $k \geq 3$  \\\hline 
3 & ${\mathbb D}_{4}$ & $(0;2,4,8)$, $k=4$  &
3 & ${\mathcal A}_{4}$ & $(0;2,3,12)$, $k=4$  \\\hline
4 & ${\mathbb Z}_{2}$ & $(0;2,k,k,2k)$, $k \geq 3$ odd  & 
4 & ${\mathbb Z}_{3}$ & $(0;k,3k,3k)$, $k \geq 3$ odd  \\\hline
4 & ${\mathbb Z}_{4}$ & $(0;4,k,4k)$, $k \geq 3$ odd   &
4 & ${\mathbb Z}_{5}$ & $(0;5,5,k)$, $k \geq 3$ odd   \\\hline 
4 & ${\mathbb D}_{3}$ & $(0;2,3,2k)$, $k \equiv 2 \mod(3)$ odd  & 
4 & ${\mathbb D}_{5}$ & $(0;2,5,10)$, $k=5$  \\\hline
\end{tabular}
\caption{The possibilities for $\bar{N}$ non-trivial and signature of $S/N$ for $n=2,3,4$}
\end{table}
}
\end{center}

\subsection{\bf Case $\bf n=2$} In this case, the corresponding Riemann surfaces have genus $(k-1)/2$, so $k \geq 5$ is odd. Moreover, if 
$k=p \geq 11$ is a prime integer, then $N={\rm Aut}(S)$. 

Up to ${\rm PSL}_{2}({\mathbb C})$, we may assume 
${\mathcal B}=\{t_{1}=1, t_{2}=\omega_{3}, t_{3}=\omega_{3}^{2}\}$, whose stabilizer is ${\mathbb D}_{3}=\langle a(x)=\omega_{3}x, b(x)=1/x\rangle$. 
So, the possibilities for non-trivial $\bar{N}$ are ${\mathbb D}_{3}$, ${\mathbb Z}_{2}=\langle b \rangle$ (up to conjugation), and ${\mathbb Z}_{3}=\langle a \rangle$.
As already seen in Example \ref{maxdihe}, the case ${\mathbb D}_{3}$ is not possible.

\subsubsection{}
 Let $\bar{N}={\mathbb Z}_{2}=\langle b \rangle$, and let $\beta \in N$ be such that $\theta(\beta)=b$. 
In this case,
{\small 
$$S:y^{k}=(x-1)(x-\omega_{3})^{\frac{k-1}{2}}(x-\omega_{3}^{2})^{\frac{k-1}{2}}, \; 
\beta(x,y)=\left( \frac{1}{x},\frac{-y}{x}  \right),$$ 
$$N=\langle \tau, \beta: \tau^{k}=\beta^{2}=\beta\tau\beta\tau^{-1}=1 \rangle \cong {\mathbb Z}_{k} \times {\mathbb Z}_{2}.$$
}

\subsubsection{} Let $\bar{N}={\mathbb Z}_{3}=\langle a \rangle$ and let $\alpha \in N$ be such that $\theta(\alpha)=a$. 
In this case, there is some
$u \in \{1,\ldots,k-1\}$, where $1+u+u^{2} \equiv 0 \mod(k)$, such that
{\small
$$S: y^{k}=(x-1)(x-\omega_{3})^{u}(x-\omega_{3}^{2})^{u^{2}}, \; 
\alpha(x,y)=\left( \omega_{3}x,\omega_{3}^{\frac{1+u+u^{2}}{k}}y^{u} \right),$$
$$N=\langle \tau \alpha: \tau^{k}=\alpha^{3}=\alpha\tau\alpha^{-1}\tau^{-u}=1\rangle \cong {\mathbb Z}_{k} \rtimes {\mathbb Z}_{3}.$$
}

Note that, for $k=5$ the above is not possible as there is no $u \in \{1,\ldots,4\}$ such that $1+u+u^{2}$ is divisible by $5$.  For $k=7$, this works with $u=2$.

\begin{rema}
 For triples $(l_{1},l_{2},l_{3})$ such that $l_{j}\in\{1,\ldots,k-1\}$, $\gcd(k,l_{j})=1$, $l_{1}+l_{2}+l_{3} \equiv 0 \mod(k)$, such that not two coordinates are equal and there is no $u \in \{1,\ldots,k-1\}$, $1+u+u^{2} \equiv 0 \mod(k)$ such that $\{l_{1},l_{2},l_{3}\}=\{1,u,u^{2}\}$ (up to ${\mathbb Z}_{k}^{*}$), and {\small $S: y^{k}=(x-1)^{l_{1}}(x-\omega_{3})^{l_{2}}(x-\omega_{3}^{2})^{l_{3}}$} will satisfies that $N=\langle \tau \rangle$.
\end{rema}

\subsection{\bf Case $\bf n=3$} In this case, the corresponding Riemann surfaces have genus $k-1$, so $k \geq 3$.  Moreover, if $k=p \geq 13$ is a prime integer, then $N={\rm Aut}(S)$.
The only possibilities for  $\bar{N}$  are ${\mathbb Z}_{2}$, ${\mathbb Z}_{3}$, ${\mathbb Z}_{4}$, ${\mathbb Z}_{2}^{2}$, ${\mathbb D}_{4}$ and ${\mathcal A}_{4}$.

\subsubsection{} $\bar{N}={\mathcal A}_{4}$ is only possible for $k=4$ and the genus three curve {\small $S: y^{4}=x^{3}-1$}.

\subsubsection{} $\bar{N}={\mathbb D}_{4}$ is only possible for $k=4$ and the genus three curve {\small $S: y^{4}=x^{4}-1$}.

\subsubsection{} $\bar{N}={\mathbb Z}_{2}^{2}$ and $S/N$ of signature $(0;2,2k,2k)$ is produced by the genus $k-1$ curves
{\small
$$S: y^{k}=(x^{2}-1)(x^{2}+1)^{\frac{k-2}{2}}, \quad \mbox{$k \geq 4$ even; or}$$
$$S: y^{k}=(x^{2}-1)(x^{2}+1)^{k-1}, \quad \mbox{for any $k \geq 3$}.$$
}
In these cases, $N=\langle \tau, \alpha, \beta: \tau^{k}=\alpha^{2}=1, \beta^{2}=\tau, (\alpha\beta)^{2}=\tau^{r},\alpha \tau \alpha=\tau, \beta \tau \beta^{-1} = \tau\rangle$,  where
$r=(k+2)/2$ in the first case, and $r=1$ in the second case, and 
{\small
$$\alpha(x,y)=(-x,y), \; 
\beta(x,y)=\left\{ \begin{array}{ll} 
\left(\frac{1}{x},\frac{\omega_{2k}y}{x}\right), & \mbox{in the first case $k \geq 4$ even}\\
\left(\frac{1}{x},\frac{\omega_{2k}y}{x^{2}}\right), & \mbox{in the second case}.
\end{array}
\right.
$$
}

\subsubsection{} $\bar{N}={\mathbb Z}_{2}^{2}$ and $S/N$ of signature $(0;2,2,2,4)$, produced by the genus three curves
{\small
$$S: y^{4}=(x^{2}-\lambda^{2})(x^{2}-\lambda^{-2}), \quad \mbox{$\lambda \in {\mathbb C} \setminus \{0,\pm 1, \pm i\}$}.$$
}
In this case, $N=\langle \tau, \alpha, \beta: \tau^{4}=\alpha^{2}=\beta^{2}=1, (\alpha\beta)^{2}=\tau^{2}, \alpha \tau \alpha=\tau, \beta \tau \beta = \tau\rangle$, where
{\small
$$\alpha(x,y)=(-x,y), \; \beta(x,y)=\left(1/x,y/x\right).$$}

\subsubsection{} $\bar{N}={\mathbb Z}_{4}$. This situation occurs if there exists some $u \in \{1,\ldots,k-1\}$, $u^{4} \equiv 1 \mod(k)$, $1+u+u^{2}+u^{3} \equiv 0 \mod(k)$, and the curve is given by
{\small
$$S: y^{k}=(x-1)(x-i)^{u}(x+1)^{u^{2}}(x+i)^{u^{3}},$$ which admits the automorphism
$$\alpha(x,y)=\left(ix,\frac{i^{\frac{1+u+u^{2}+u^{3}}{k}} y^{u}}{(x+i)^{\frac{u^{4}-1}{k}}} \right),$$
}

\subsubsection{} $\bar{N}={\mathbb Z}_{3}$. This occurs if there is some $l \in \{1,\ldots,k-1\}$, $\gcd(k,l)=1$, $1+3l \equiv 0 \mod(k)$, and the curve is given by
{\small $$S: y^{k}=x(x^{3}-1)^{l},$$}
which admits the automorphism
{\small $\alpha(x,y)=(\omega_{3}x, \omega_{3}^{\frac{1+3l}{k}} y).$}

\subsubsection{} $\bar{N}={\mathbb Z}_{2}$ and $S/N$ of signature $(0;2,2,k,k)$. This occurs if there are $u, l \in \{1,\ldots,k-1\}$, $\gcd(k,l)=1$, $u^{2} \equiv 1 \mod(k)$, $(1+u)(l+1) \equiv 0 \mod(k)$, and the curve is given by
{\small
$$S: y^{k}=(x-1)(x+1)^{u}(x-i)^{l}(x+i)^{ul}$$
}
which admits the automorphism
{\small
$$\alpha(x,y)=\left(-x, \frac{(-1)^{\frac{(u+1)(l+1)}{k}}y^{u}}{(x+1)^{\frac{u^{2}-1}{k}} (x+i)^{\frac{l(u^{2}-1)}{k}}  }\right).$$
}

\subsubsection{} $\bar{N}={\mathbb Z}_{2}$ and $S/N$ of signature $(0;k,2k,2k)$. This occurs for the curve
{\small
$$S: y^{k}=x(x^{2}-1)^{l},$$
}
 where $l \in \{1,\ldots,k-1\}$, $\gcd(k,l)=1$, $\gcd(k,1+2l)=1$, which admits the automorphism
 {\small
 $
 \alpha(x,y)=(-x, (-1)^{k} y  ).
 $
}

\subsection{\bf Case $\bf n=4$} In this case, the corresponding Riemann surfaces have genus $3(k-1)/2$, so $k \geq 3$ is odd. Moreover, if 
$k=p \geq 19$ is a prime integer, then $N={\rm Aut}(S)$.
The only possibilities for $\bar{N}$  are ${\mathbb Z}_{2}$, ${\mathbb Z}_{3}$, ${\mathbb Z}_{4}$, ${\mathbb Z}_{5}$, ${\mathbb D}_{3}$ and ${\mathbb D}_{5}$.

\subsubsection{} $\bar{N}={\mathbb D}_{5}$ is only possible for $k=5$ and the genus six curve {\small $S: y^{5}=x^{5}-1$}.

\subsubsection{} $\bar{N}={\mathbb D}_{3}$ only happens for $k \equiv 2 \mod(3)$ and {\small $S: y^{k}=x(x^{3}-1)^{\frac{k-2}{3}}$}.

\subsubsection{} $\bar{N}={\mathbb Z}_{5}$ happens for those $k \geq 3$ for which there exists some $u \in \{1,\ldots,k-1\}$, such that $\gcd(k,u)=1$ and $1+u+u^{2}+u^{3}+u^{4} \equiv 0 \mod(k)$. In this case, 
{\small
$$S: y^{k}=(x-1)(x-\omega_{5})^{u}(x-\omega_{5}^{2})^{u^{2}}(x-\omega_{5}^{3})^{u^{3}}(x-\omega_{5}^{4})^{u^{4}},$$
} which admits the automorphism
{\small
$$\alpha(x,y)=\left(\omega_{5}x, \omega_{5}^{\frac{1+u+u^{2}+u^{3}+u^{4}}{k}} \frac{y^{u}}{(x-\omega_{5}^{4})^{\frac{u^{5}-1}{k}}}\right).$$
}

\subsubsection{} $\bar{N}={\mathbb Z}_{3}$. Working as in previous cases, by taking $a(x)=\omega_{3}x \in \bar{N}$, one obtain that  
{\small
$$S: y^{k}=x(x^{3}-1)^{l},$$
} where $l \in \{1,\ldots,k-1\}$, $\gcd(k,l)=1$ and $\gcd(k,1+3l)=1$.
This curve admits the automorphisms 
{\small
$$\alpha(x,y)=\left(\omega_{3}x, \omega_{3}^{\frac{1+3l}{k}}y\right)$$
}

\begin{rema}
Let us observe that, in the case that  $2+3l \equiv 0 \mod(k)$, the curve also admits the automorphism $\beta(x,y)=\left(\frac{1}{x}, \frac{(-1)^{1/k}y}{x^{\frac{2+3l}{k}}} \right) \in N,$
in which  case, $\bar{N}=\langle a(x)=\omega_{3}x, b(x)=1/x\rangle \cong {\mathbb D}_{3}$.
\end{rema}

\subsubsection{} $\bar{N}={\mathbb Z}_{2}$. In this case, we may assume ${\mathbb Z}_{2}=\langle a(x)=-x \rangle$ and 
{\small
$$S: y^{k}=x(x^{2}-1)^{l_{1}}(x^{2}-\lambda^{2})^{l_{2}},$$
}
where $\lambda^{2} \neq 0,1$, and $l_{1},l_{2} \in \{1,\ldots,k-1\}$, $\gcd(k,l_{1})=1=\gcd(k,l_{2})$, $1+2(l_{1}+l_{2}) \equiv 0 \mod(k)$.
This curve admits the automorphism $\alpha(x,y)=(-x, \omega_{2k} y).$

\section{Example: $n=5$ and $\tau$ tame}\label{Sec:5}
In this section, we assume $n=5$ and $\tau$ is a tame $k$-gonal automorphism.
So, $g=2(k-1)$ and $k\geq 2$. Moreover, if either $k=2$ or $k=p \geq 23$ is a prime integer, then $N={\rm Aut}(S)$.
In this case, 
${\mathcal B}=\{t_{1},\ldots,t_{6}\}$, and 
{\small
$$S: y^{k}=(x-t_{1})^{l_{1}}(x-t_{2})^{l_{2}}(x-t_{3})^{l_{3}} (x-t_{4})^{l_{4}} (x-t_{5})^{l_{5}} (x-t_{6})^{l_{6}},$$
}
for suitable values of $l_{j} \in \{1,\ldots,k-1\}$ such that $\gcd(k,l_{j})=1$ and $l_{1}+\cdots+l_{6} \equiv 0 \mod(k)$. We may always assume $l_{1}=1$.

The only possibilities for non-trivial $\bar{N}$ are 
the cyclic groups of orders $2, 3, 4, 5, 6$, or the dihedral groups ${\mathbb D}_{2}={\mathbb Z}_{2}^{2}$, ${\mathbb D}_{3}, {\mathbb D}_{4}, {\mathbb D}_{6}$, or ${\mathcal A}_{4}$ or ${\mathfrak S}_{4}$. For these cases, respectively, $|{\rm Aut}(S)| \geq |N|=4k, 6k,8k,12k, 12k,24k$.

\begin{rema}
If $\gcd(k,30)=1$, then $N=\langle \tau \rangle \rtimes \bar{N}$ (by the Schur-Zassenhaus theorem)
\end{rema}

\begin{rema}
If $\bar{N} \in \{{\mathbb Z}_{4}, {\mathbb Z}_{5}, {\mathbb Z}_{6}, {\mathbb D}_{4}, {\mathbb D}_{6}, {\mathcal A}_{4}, {\mathfrak S}_{4}\}$, then $S/N$ is a triangular orbifold. 
The only cases for which the genus zero quotient orbifold $S/N$ may have four cone points (so these Riemann surfaces form a one-dimensional family) is when $\bar{N} \in \{ {\mathbb Z}_{2}^{2}, {\mathbb D}_{3}\}$. 
\end{rema}

\subsection{} Assume $\bar{N}={\mathfrak S}_{4}$. We may assume ${\mathfrak S}_{4}=\langle a(x)=ix, b(x)=(1-x)/(1+x)\rangle$ and $t_{1}=0, t_{2}=1, t_{3}=i, t_{4}=-1, t_{5}=-i, t_{6}=\infty$.
Let $\alpha, \beta \in N$ be such that $\theta(\alpha)=a$ and $\theta(\beta)=b$. Since $a(t_{1})=t_{1}$, it follows that $u_{\alpha}=1$. Similarly, as $bab(t_{2})=t_{2}$, it also follows that $u_{\beta\alpha\beta^{-1}}=1$. 
By following the action of $a$, we obtain that $l_{2}=l_{3}=l_{4}=l_{5}$ and, by following the action of $bab$ we obtain that $l_{1}=l_{2}$. In particular, as we may assume $l_{1}=1$, we obtain that $6 \equiv 0 \mod(k)$, that is, $k \in \{2,3,6\}$, and 
{\small
$$S: y^{k}=x(x^{4}-1), \quad 
\alpha(x,y)=(ix, \omega_{4k}y),  \; \beta(x,y)=\left(\frac{1-x}{1+x}, \frac{8^{1/k}y}{(x+1)^{6/k}}\right).$$
}

The signature of $S/N$ is $(0;2,3,4k)$.

\subsection{} Assume $\bar{N}$ contains ${\mathcal A}_{4}$.
We may assume ${\mathcal A}_{4}=\langle a(x)=-x, b(x)=(i-x)/(i+x)\rangle$ and $t_{1}=0, t_{2}=1, t_{3}=i, t_{4}=-1, t_{5}=-i, t_{6}=\infty$.
Let $\alpha, \beta \in N$ be such that $\theta(\alpha)=a$ and $\theta(\beta)=b$.
Since $a(t_{1})=t_{1}$, $u_{\alpha}=1$. Let $u=u_{\beta}$, which must satisfy that $u^{3} \equiv 1 \mod(k)$.
We may assume $l_{1}=1$. By following the action of $b$, we obtain (equalities are modulo $(k)$) that $l_{2}=u$, $l_{3}=u^{2}$, $l_{5}=ul_{4}$ and $l_{6}=u^{2}l_{4}$. By following the action of $a$ we obtain that $l_{4}=l_{2}$ and $l_{5}=l_{3}$. In particular, $l_{2}=l_{4}=u$, $l_{3}=l_{5}=u^{2}$, $l_{6}=1$, and $2(1+u+u^{2}) \equiv 0 \mod(k)$. It follows that 
{\small
$$S: y^{k}=x(x^{2}-1)^{u}(x^{2}+1)^{u^{2}},$$
$$\alpha(x,y)=(-x, \omega_{2k}y), \; \beta(x,y)=\left(\frac{i-x}{i+x}, \frac{(-4i)^{\frac{u}{k}} 2^{\frac{u^{2}}{k}} (-1)^{ \frac{1+u^{2}}{k}} y^{u}}{(x-i)^{\frac{u^{3}-1}{k}} (x+i)^{\frac{u^{3}-1}{k}+\frac{2(1+u+u^{2})}{k}}} \right).$$
}

The signature of $S/N$ is $(0;3,3,2k)$.

\begin{rema}
Note that, if $u=1$, then we obtain the case when $\bar{N}={\mathfrak S}_{4}$.

\end{rema}

\subsection{} Assume $\bar{N}$ contains ${\mathbb D}_{6}$.
We may assume ${\mathbb D}_{6}=\langle a(x)=\omega_{6}x, b(x)=1/x\rangle$ and $t_{1}=1, t_{2}=\omega_{6}, t_{3}=\omega_{6}^{2}=\omega_{3}, t_{4}=\omega_{6}^{3}=-1, t_{5}=\omega_{6}^{4}=-\omega_{6}, t_{6}=\omega_{6}^{5}=-\omega_{3}$.
Let $\alpha, \beta \in N$ be such that $\theta(\alpha)=a$ and $\theta(\beta)=b$.
Since $b(t_{1})=t_{1}$, $u_{\beta}=1$. Let $u=u_{\alpha}$, which must satisfy that $u^{6} \equiv 1 \mod(k)$.
We may assume $l_{1}=1$. So, by following $a$, we obtain that $l_{2}=u$, $l_{3}=u^{2}$, $l_{4}=u^{3}$, $l_{5}=u^{4}$ and $l_{6}=u^{5}$. By following $b$, we observe that $l_{6}=l_{2}$. In particular, $u^{2} \equiv 1 \mod(k)$. The equation $l_{1}+\cdots+l_{6} \equiv 0 \mod(k)$ now asserts that $3(1+u) \equiv 0 \mod(k)$, and 
{\small
$$S: y^{k}=(x^{3}-1)(x^{3}+1)^{u},$$
$$\alpha(x,y)=\left(\omega_{6}x, \frac{\omega_{2k}^{1+u} y^{u}}{(x^{3}+1)^{\frac{u^{2}-1}{k}}}  \right), \; \beta(x,y)=\left(\frac{1}{x},\frac{\omega_{2k} y}{x^{\frac{3(1+u)}{k}} } \right).$$
}

The signature of $S/N$ is $(0;2,6,2k)$.

\begin{rema}
If $k \neq 3$, then the above signature is finitely maximal \cite{Singerman}, so ${\rm Aut}(S)=G=N$ and $\bar{N} \cong {\mathbb D}_{6}$. This surface, for the case $k=p \geq 5$ a prime integer, also appeared in \cite{CR}. In that paper, the authors studied those cyclic $p$-gonal curves of genus $2(p-1)$ admitting a group of automorphisms of order a multiple of $4p$. Here, we are considering those whose groups of automorphisms have order a multiple of $6p$.
\end{rema}

\begin{rema}
In the case that $\gcd(k,3)=1$, then $u=k-1$. Moreover, 
$\alpha \tau \alpha^{-1}=\tau^{-1}$, $\beta \tau \beta^{-1}=\tau$ and 
$\alpha$ has order $6$.

(1) If $k \geq 5$ is odd, then both $\beta$ and $\alpha\beta$ have order $2$. So, in this case,
$\langle \alpha, \beta \rangle \cong {\mathbb D}_{6},$
and 
$$G=\langle \tau, \alpha, \beta: \tau^{k}=\alpha^{6}=\beta^{2}=(\alpha\beta)^{2}=1, \alpha \tau \alpha^{-1}=\tau^{-1},  \beta \tau \beta=\tau
 \rangle =\langle \tau \rangle \rtimes \langle \alpha,\beta \rangle \cong {\mathbb Z}_{k} \rtimes {\mathbb D}_{6}.$$

(2) If $k \geq 4$ is even, then $\beta^{2}=\tau$ and $\alpha\beta$ has order $2$. So, in this case, 
 $$G=\langle \tau, \alpha, \beta: \tau^{k}=\alpha^{6}=(\alpha\beta)^{2}=1, \beta^{2}=\tau,\alpha \tau \alpha^{-1}=\tau^{-1},  \beta \tau \beta^{-1}=\tau
 \rangle.$$ 
 
 \end{rema}

\subsection{} Assume $\bar{N}$ contains ${\mathbb D}_{4}$.
We may assume ${\mathbb D}_{4}=\langle a(x)=ix, b(x)=1/x\rangle$ and $t_{1}=0, t_{2}=1, t_{3}=i, t_{4}=-1, t_{5}=-i, t_{6}=\infty$.
Let $\alpha, \beta \in N$ be such that $\theta(\alpha)=a$ and $\theta(\beta)=b$. As $a$ and $b$ both fix points in ${\mathcal B}$, it follows that $u_{\alpha}=u_{\beta}=1$. Following the action of $a$, we obtain that $l_{2}=l_{3}=l_{4}=l_{5}$, and by following the action of $b$, $l_{6}=l_{1}$. As we may assume $l_{1}=1$, 
{\small
$$S: y^{k}=x(x^{4}-1)^{l},$$
}
where $l \in \{1,\ldots,k-1\}$, $\gcd(k,l)=1$, and $2(1+2l) \equiv 0 \mod(k)$, and 
{\small
$$\alpha(x,y)= \left(ix,\omega_{4k}y\right), \; \beta(x,y)=\left(\frac{1}{x}, \frac{\omega_{2k}^{l} y}{x^{\frac{2(1+2l)}{k}}} \right).$$
}

The signature of $S/N$ is $(0;2,2k,4k)$. Note that, if $\gcd(k,2)=1$, then $l=\frac{k-1}{2}$.

\subsection{}
Assume $\bar{N}$ contains ${\mathbb Z}_{2}^{2}=\langle a,b: a^{2}=b^{2}=(ab)^{2}=1\rangle$. The case when $k \geq 5$ is a prime integer has been studied in \cite{CR}.
Up to ${\rm PSL}_{2}({\mathbb C})$, we may assume $a(x)=-x$, $b(x)=1/x$.
There are two cases we must consider: 
\begin{enumerate}[leftmargin=15pt]
\item[(i)] ${\mathcal B}$ consists of three ${\mathbb Z}_{2}^{2}$-orbits (corresponding to the fixed points of all elements of order two). 
\item[(ii)] ${\mathcal B}$ consists of two ${\mathbb Z}_{2}^{2}$-orbits (one is the locus of fixed points of one of the involutions).
\end{enumerate}

\subsubsection{\rm Case (i)}
In this case, $S/\theta^{-1}({\mathbb Z}_{2}^{2})$ has signature $(0;2k,2k,2k)$ and 
$t_{1}=0, t_{2}=1, t_{3}=i, t_{4}=-1, t_{5}=-i, t_{6}=\infty$.
Let $\alpha, \beta \in N$ be such that $\theta(\alpha)=a$ and $\theta(\beta)=b$. As each $a, b$, and $ab$ has a fixed point in ${\mathcal B}$, then $u_{\alpha}=u_{\beta}=u_{\alpha\beta}=1$.
As we may assume $l_{1}=1$, by following the actions of $a$ and $b$, we obtain
$l_{4}=l_{2}, l_{5}=l_{3}, l_{6}=l_{1}=1.$
Since, $1+\cdots+l_{6} \equiv 0 \mod(k)$,  we must have $2(1+l_{2}+l_{3}) \equiv 0 \mod(k)$, and 
{\small
$$S: y^{k}=x(x^{2}-1)^{l_{2}}(x^{2}+1)^{l_{3}},$$
$$\alpha(x,y)=(-x,\omega_{2k}y), \; 
\beta(x,y)=\left(\frac{1}{x}, \frac{\omega_{2k}^{l_{2}} y}{x^{2(1+l_{2}+l_{3})/k} }  \right).$$
}

We note that $\alpha^{2}=\tau$, $\beta^{2}=\tau^{l_{2}}$ and $(\alpha\beta)^{2}=\tau^{l_{2}+1}$. Also, if $k$ odd, then $1+l_{2}+l_{3} \equiv 0 \mod(k)$.

\begin{rema}
The above curves depend on a choice of the triple $(l_{1},l_{2},l_{3})$, where $l_{j} \in \{1,\ldots,k-1\}$, $\gcd(k,l_{j})=1$ and $2(l_{1}+l_{2}+l_{3}) \equiv 0 \mod(k)$. Two such triplets
$(l_{1},l_{2},l_{3})$ and $(\hat{l}_{1},\hat{l}_{2},\hat{l}_{3})$ define isomorphic curves if and only if there is some $u  \in \{1,\ldots,k-1\}$, $\gcd(k,u)=1$, and a permutation $\sigma \in {\mathfrak S}_{3}$, such that $\hat{l}_{j}=u l_{\sigma(j)}$, for $j=1,2,3$. Some examples are the following.

\begin{enumerate}[leftmargin=15pt]
\item If $k=5$, then $g=8$, and (by taking $l_{1}=1$) we have the following cases: $$(l_{1},l_{2},l_{3}) \in \{(1,2,2), (1,1,3), (1,3,1)\}.$$ All of them are isomorphic.
For instance, the first possibility produces the curve:
{\small $S: y^{5}=x(x^{2}-1)^{2}(x^{2}+1)^{2}=x(x^{4}-1)^{2}.$}

\item If $k=7$, then  $g=12$, and (by taking $l_{1}=1$) we have the following cases: $$(l_{1},l_{2},l_{3}) \in \{(1,3,3), (1,2,4), (1,1,5), (1,4,2)\}.$$ The triplets $(1,3,3)$, $(1,1,5)$ produce isomorphic curves; the same for the triplets $(1,2,4)$ and $(1,4,2)$. But the triples $(1,1,5)$ and $(1,2,4)$ are non-isomorphic. These two curves are
{\small $S_{1}: y^{7}=x(x^{2}-1)(x^{2}+1)^{5},$} and {\small $S_{2}: y^{7}=x(x^{2}-1)^{2}(x^{2}+1)^{4}.$}

\item If there exists $r \in \{1,\ldots,k-1\}$ such that $r^{3} \equiv 1 \mod (k)$ and $1+r+r^{2} \equiv 0 \mod(k)$, then by taking $(l_{1},l_{2},l_{3})=(1,r,r^{2})$, we obtain the curve
{\small $S: y^{k}=x(x^{2}-1)^{r}(x^{2}+1)^{r^{2}},$}
for which $\bar{N}={\mathcal A}_{4}$. For instance, if $k=7$, then $r=2$ and the curve $y^{7}=x(x^{2}-1)^{2}(x^{2}+1)^{4}$ admits the extra automorphism $\delta \in N$ (which satisfies $\delta \tau \delta^{-1}=\tau^{2}$) defined by
{\small
$$\delta(x,y)=\left( \frac{x+i}{x-i}, -\frac{16^{2/7} y^{2}}{(x-i)(x^{2}+1)}\right).$$
}

\end{enumerate}
\end{rema}

\subsubsection{\rm Case (ii)}
We may assume that $t_{1}=0$, $t_{2}=\lambda$, $t_{3}=-1/\lambda$, $t_{4}=-\lambda$, $t_{5}=1/\lambda$ and $t_{6}=\infty$, where $\lambda \in {\mathbb C} \setminus \{0,\pm1, \pm i\}$.
If $\alpha \in N$ is such that $\theta(\alpha)=a$, then (as $a$ fixes a point in ${\mathcal B}$), $u_{\alpha}=1$, i.e., $\alpha(x,y)=(-x,Q_{1}(x) y)$.
If $\beta \in N$ is such that $\theta(\beta)=b$, then $\beta(x,y)=(1/x, Q_{2}(x) y^{l})$, for $t=u_{\beta}$ and $t^{2} \equiv 1 \mod(k)$.
Following the action of $a$ ensures that 
$l_{4}=l_{2}, l_{5}=l_{3}.$
Similarly, by following the action of $b$, we obtain
$l_{5}=t l_{2}, l_{4}=t l_{3}, l_{6}=l_{1}=1.$
In particular, $l_{3}=tl_{2}$ (modulo $k$) and, 
since $l_{1}+\cdots+l_{6} \equiv 0 \mod(k)$, it holds that 
$2(1+(1+t)l_{2}) \equiv 0 \mod(k),$
and 
{\small
$$S: y^{k}=x(x^{2}-\lambda^{2})^{l_{2}}(x^{2}-\lambda^{-2})^{tl_{2}}, \; 
\alpha(x,y)=(-x,\omega_{2k} y).$$
}

So $\alpha^{2}=\tau$. Since
{\small 
$$b(x)(b(x)^{2}-\lambda^{2})^{l_{2}}(b(x)^{2}-\frac{1}{\lambda^{2}})^{tl_{2}}=
(-1)^{l_{2}(1+t)} \lambda^{2l_{2}(1-t)} \frac{y^{k}}{x^{2(1+(1-t)l_{2}-l_{1}(t-1)} x^{t-1} (x^{2}-\lambda^{2})^{l_{2}(t-1)}(x^{2}-\frac{1}{\lambda^{2}})^{tl_{2}(1-t)} }, $$
}
it follows that $t=1$. So,
{\small
$$S: y^{k}=x\left(x^{2}-\lambda^{2}\right)^{l_{2}}\left(x^{2}-\lambda^{-2}\right)^{l_{2}},$$
}
where $2(1+2l_{2}) \equiv 0 \mod(k)$, and 
{\small
$$\beta(x,y)=\left( \frac{1}{x}, \frac{\omega_{k}^{l_{2}}y}{x^{2(1+2l_{2})/k}} \right).$$
}
So, $\beta^{2}=\tau^{2l_{2}}$, and
{\small
$G=\langle \tau, \alpha, \beta: \tau^{k}=1, \alpha^{2}=\tau, \beta^{2}=\tau^{-1}, (\alpha\beta)^{2}=1\rangle \leq N.$
}

\s

The quotient orbifold $S/G$ has signature $(0;2,2,k,2k)$. As this signature is finitely maximal \cite{Singerman}, for the generic value of $\lambda$, we have $G={\rm Aut}(S)$.

\begin{rema}
If $k \geq 3$ is odd, then $l_{2}=\frac{k-1}{2}$ and 
{\small $S: y^{k}=x\left(x^{2}-\lambda^{2}\right)^{\frac{k-1}{2}}\left(x^{2}-\lambda^{-2}\right)^{\frac{k-1}{2}},$}
$\alpha(x,y)=(-x,\omega_{2k}y),$ and 
$\beta(x,y)=\left( \frac{1}{x}, \frac{\omega_{k}^{\frac{k-1}{2}}y}{x^{2}} \right).$
If, moreover,  $\lambda \in \{\pm \omega_{8}, \pm i \omega_{8}\}$, then ${\rm Aut}(S)/\langle \tau \rangle={\mathcal A}_{4}$.

\end{rema}
\subsection{}
Let us assume $\bar{N}$ contains ${\mathbb D}_{3}=\langle a, b: a^{3}=b^{2}=(ab)^{2}=1\rangle$, in particular, that $|{\rm Aut}(S)| \geq 6k$, and 
$\bar{N} \in \{ {\mathbb D}_{3}, {\mathbb D}_{6}, {\mathfrak S}_{4}\}$. Hereafter, to simplify our computations, we will assume $k \geq 4$ such that $\gcd(k,3)=1$.

\begin{rema}
By the Schur-Zassenhaus theorem, if $\gcd(k,6)=1$, then $\theta^{-1}({\mathbb D}_{3})=\langle \tau \rangle \rtimes {\mathbb D}_{3}$.
\end{rema}

There are two cases we must consider: 
\begin{enumerate}[leftmargin=15pt]
\item[(i)] ${\mathcal B}$ consists of two ${\mathbb D}_{3}$-orbits (corresponding to the fixed points of all elements of order two in ${\mathbb D}_{3}$). 
\item[(ii)] ${\mathcal B}$ consists of one ${\mathbb D}_{3}$-orbit.
\end{enumerate}

\subsubsection{\rm Case (i)}\label{caso2}
In this case, $S/\theta^{-1}({\mathbb D}_{3})$ has signature $(0;3,2k,2k)$.
Up to conjugation in ${\rm PSL}_{2}({\mathbb C})$, we may assume 
${\mathbb D}_{3}=\langle a(x)=\omega_{3}x, b(x)=1/x\rangle$ and 
$t_{1}=1$, $t_{2}=\omega_{6}$,  $t_{3}=\omega_{6}^{2}=\omega_{3}$,  $t_{4}=\omega_{6}^{3}=-1$, $t_{5}=\omega_{6}^{4}$, $t_{6}=\omega_{6}^{5}$. 
Let $\alpha, \beta \in N$ be such that $\theta(\alpha)=a$ and $\theta(\beta)=b$. In this case $u=u_{\alpha}$ satisfies that $u^{3} \equiv 1 \mod(k)$, and (as $b$ has fixed points in ${\mathcal B}$) $u_{\beta}=1$.
As we may assume $l_{1}=1$, by following the actions of $a$ and $b$, we must have (equalities are assumed modulo $k$)
$l_{3}=u, l_{5}=u^{2}, l_{4}=ul_{2}, l_{6}=u^{2}l_{2}, l_{6}=l_{2}.$
As $u^{2}l_{2}=l_{6}=l_{2}$ (equalities are modulo $k$)  and $\gcd(k,l_{2})=1$, we obtain that $u^{2} \equiv 1 \mod(k)$. So, $u=1$. Now, as
$0 \equiv l_{1}+\cdots+l_{6} \equiv 3(1+l_{2}) \mod(k)$ and we are assuming $\gcd(k,3)=1$, we obtain that $1+l_{2} \equiv 0 \mod(k)$, that is, $l_{2}=k-1$. So,
{\small $S: y^{k}=(x^{3}-1)(x^{3}+1)^{k-1}.$}
But this case corresponds to have ${\mathbb D}_{6}$ contained in $\bar{N}$.

\subsubsection{\rm Case (ii)}\label{caso1}
Let us now consider the case when ${\mathcal B}$ is a complete ${\mathbb D}_{3}$-orbit. In this case, $S/\theta^{-1}({\mathbb D}_{3})$ has signature $(0;2,2,3,k)$.
We may assume that
$t_{1}=1, \; t_{2}=\omega_{3}, \; t_{3}=\omega_{3}^{2}, \; t_{4}=\lambda_{1}, \; t_{5}=\lambda_{2}, \; t_{6}=\lambda_{3},$
$a(t_{1})=t_{2}, \; a(t_{2})=t_{3}, \; a(t_{3})=t_{1}, \;   a(t_{4})=t_{5}, \; a(t_{5})=t_{6}, \; a_(t_{6})=t_{4}, $
$b(t_{1})=t_{4}, \; a(t_{2})=t_{6}, \; a_(t_{3})=t_{5}.$
In this case, 
$a(x)=\omega_{3} x, \; b(x)=\frac{\lambda_{1}}{x},$
$\lambda_{2}=\omega_{3} \lambda_{1}, \; \lambda_{3}=\omega_{3}^{2} \lambda_{1}.$
The parameter $\lambda_{1}$ belongs to the region $\Omega=\{\lambda \in {\mathbb C}: \lambda^{3} \notin \{0,1\}\}$.
So, in this case, 
{\small
$$S: y^{p}=(x-1)^{l_{1}}(x-\omega_{3})^{l_{2}}(x-\omega_{3}^{2})^{l_{3}} (x-\lambda_{1})^{l_{4}} (x-\omega_{3}\lambda_{1})^{l_{5}} (x-\omega_{3}^{2})^{l_{6}},$$
}for suitable values of $l_{j} \in \{1,\ldots,k-1\}$ such that $\gcd(k,l_{j})=1$ and $l_{1}+\cdots+l_{6} \equiv 0 \mod(k)$, where we may assume $l_{1}=1$.

Let $\alpha, \beta \in N$ be such that $\theta(\alpha)=a$, $\theta(\beta)=b$, and 
$u=u_{\alpha},v=u_{\beta} \in \{1,\ldots,k-1\}$ (so, $\gcd(k,u)=\gcd(k,v)=1$, $u^{3} \equiv 1 \mod(k)$ and $v^{2} \equiv 1 \mod(k)$).
Following the actions of $a$ and $b$, we obtain (the equalities are modulo $k$)
$l_{2}=ul_{1}, \; l_{3}=u^{2}l_{1}, \; l_{1}=u^{3}l_{1},   l_{5}=ul_{4}, \; l_{6}=u^{2}l_{4}, \; l_{4}=u^{3}l_{4}, $
$l_{4}=v l_{1}, \; l_{1}=v^{2}l_{1}, \; l_{5}=v l_{3}, \; l_{6}=v l_{2}.$
Using the equality
$u^{2}l_{1}=l_{3}=v l_{5}= vul_{4}=v^{2}ul_{1}$ (modulo $k$), we obtain that $u \equiv v^{2} \equiv 1 \mod(k)$, i.e, $u=1$. 
Then, $0 \equiv l_{1}+\cdots+l_{6} \equiv 3l_{1}(1+v) \mod(k)$, i.e., (as $\gcd(k,l_{1})=\gcd(k,3)=1$), $v=k-1$.
As we may assume $l_{1}=1$,
{\small 
$$S=C(\lambda_{1}): y^{k}=(x^{3}-1) (x^{3}-\lambda_{1}^{3})^{k-1}, \quad \lambda_{1} \in \Omega.$$
}
By looking at the above equation for $S$, we may assume that
$$\alpha(x,y)=(\omega_{3} x,y),$$
so $\alpha^{3}=1$ and $\tau \alpha=\alpha \tau$.
The form of $\beta$ is as follows
{\small
$$\beta(x,y)=\left(\frac{\lambda_{1}}{x}, \frac{-\lambda_{1}^{3(k-1)/k}}{x^{3}(x^{3}-\lambda_{1}^{3})^{k-2}} y^{k-1}\right).$$
}

So, $\beta^{2}=(\alpha\beta)^{2}=1$ and $\beta \tau \beta^{-1}=\tau^{-1}$. Then,  
$\langle \alpha, \beta \rangle \cong {\mathbb D}_{3},$
and 
{\small
$$G=\langle \tau,\alpha,\beta: \tau^{k}=\alpha^{3}=\beta^{2}=(\alpha\beta)^{2}=1, \alpha \tau \alpha^{-1}=\tau, \beta \tau \beta=\tau^{-1}
\rangle =\langle \tau \rangle \rtimes \langle \alpha,\beta\rangle \cong {\mathbb Z}_{k} \rtimes {\mathbb D}_{3}.
$$
}

For $\lambda_{1}^{3} \neq 1$ it will happen that ${\rm Aut}(S)=G=N$ and $\bar{N}=\langle a,b\rangle \cong {\mathbb D}_{3}$. 
For $\lambda_{1}^{3}=1$ we obtain the previous case \ref{caso2}.

\begin{rema}
If $\lambda, \mu \in \Omega$, then $C(\lambda) \cong C(\mu)$ if and only if $\mu^{3} \in \{\lambda^{3}, \lambda^{-3}\}$, i.e., if and only if $\lambda$ and $\mu$ belong to the same orbit under the group 
$\langle U(\lambda)=\omega_{3} \lambda, V(\lambda)=\lambda^{-1}\rangle \cong {\mathbb D}_{3}$ of conformal automorphisms of $\Omega$. This, in particular, permits to observe that the field of moduli of $C(\lambda)$ is ${\mathbb Q}(\lambda^{3}+\lambda^{-3})$. For $\lambda^{3}=1$, $C(\lambda)$ is defined over its field of moduli ${\mathbb Q}$. If $\lambda^{3} \neq 1$, then our model $C(\lambda)$ 
is defined over a quadratic extension of its field of moduli. But, as a consequence of the results in \cite{AQ}, this surface is definable over its field of moduli. 
\end{rema}


\subsection{} Assume $\bar{N}$ contains ${\mathbb Z}_{6}$.
In this case, we may assume ${\mathbb Z}_{6}=\langle a(x)=\omega_{6} x\rangle$ and $t_{1}=1$, $t_{2}=\omega_{6}$, $t_{3}=\omega_{6}^{2}$, $t_{4}=\omega_{6}^{3}$, $t_{5}=\omega_{6}^{4}$ and $t_{6}=\omega_{6}^{5}$. Let $\alpha \in N$ be such that $\theta(\alpha)=a$ and $u=u_{\alpha}$ (so $u^{6} \equiv 1 \mod(k)$). In this case, 
{\small
$$S: y^{k}=(x-1)(x-\omega_{6})^{u}(x-\omega_{6}^{2})^{u^{2}}(x-\omega_{6}^{3})^{u^{3}}(x-\omega_{6}^{4})^{u^{4}}(x-\omega_{6}^{5})^{u^{5}},$$
}
where $1+u+u^{2}+u^{3}+u^{4}+u^{5} \equiv 0 \mod(k)$, and
{\small
$$\alpha(x,y)=\left(\omega_{6}x, \frac{ \omega_{6}^{\frac{1+u+u^{2}+u^{3}+u^{4}+u^{5}}{k}}  y^{u}}{(x-\omega_{6}^{5})^{\frac{u^{6}-1}{k}}}\right).$$
}

In this case, $S/N$ has signature $(0;6,6,k)$.

\subsection{} Assume $\bar{N}$ contains ${\mathbb Z}_{5}$.
In this case, we may assume ${\mathbb Z}_{5}=\langle a(x)=\omega_{5} x\rangle$ and $t_{1}=0$, $t_{2}=1$, $t_{3}=\omega_{5}$, $t_{4}=\omega_{5}^{2}$, $t_{5}=\omega_{5}^{3}$ and $t_{6}=\omega_{5}^{4}$. Let $\alpha \in N$ be such that $\theta(\alpha)=a$. Since $a$ fixes a point in ${\mathcal B}$, $u_{\alpha}=1$. In this case, 
{\small
$$S: y^{k}=x(x^{5}-1)^{l},$$
}
where $l \in \{1,\ldots,k-1\}$, $\gcd(k,l)=1$, $1+5l \equiv 0 \mod(k)$, 
{\small
$\alpha(x,y)=\left(\omega_{5}x, \omega_{5}^{\frac{1}{k}} y\right),$} and $S/N$ has signature $(0;5,k,5k)$.

\subsection{} Assume $\bar{N}$ contains ${\mathbb Z}_{4}$.
In this case, we may assume ${\mathbb Z}_{4}=\langle a(x)=i x\rangle$ and $t_{1}=0$, $t_{2}=1$, $t_{3}=i$, $t_{4}=-1$, $t_{5}=-i$ and $t_{6}=\infty$. Let $\alpha \in N$ be such that $\theta(\alpha)=a$. Since $a$ fixes a point in ${\mathcal B}$, $u_{\alpha}=1$. In this case, 
{\small
$$S: y^{k}=x(x^{4}-1)^{l},$$
}
where $l \in \{1,\ldots,k-1\}$, $\gcd(k,l)=1$, and $1+4l \nequiv 0 \mod(k)$, 
{\small
$\alpha(x,y)=\left(ix, \omega_{4k} y\right),$
} and $S/N$ has signature $(0;k,4k,4k)$.

\begin{rema}
Note that, for the case $1+4l \equiv k-1 \mod(k)$, the above curve has ${\mathbb D}_{4}$ contained  in $\bar{N}$.
\end{rema}

\subsection{} Assume $\bar{N}$ contains ${\mathbb Z}_{3}$.
In this case, we may assume ${\mathbb Z}_{3}=\langle a(x)=\omega_{3} x\rangle$ and $t_{1}=1$, $t_{2}=\omega_{3}$, $t_{3}=\omega_{3}^{2}$, $t_{4}=\lambda$, $t_{5}=\omega_{3}\lambda$ and $t_{6}=\omega_{3}^{2}\lambda$, where $\lambda \neq 0$ and $\lambda^{3} \neq 1$. 
Let $\alpha \in N$ be such that $\theta(\alpha)=a$ and $u=u_{\alpha}$ (so $u^{3} \equiv 1 \mod(k)$). In this case, 
{\small
$$S: y^{k}=(x-1)(x-\omega_{3})^{u}(x-\omega_{3}^{2})^{u^{2}}(x-\lambda)^{l}(x-\omega_{3}\lambda)^{ul}(x-\omega_{3}^{2}\lambda)^{u^{2}l},$$
}
where $l \in \{1,\ldots,k-1\}$, $\gcd(k,l)=1$, $(1+u+u^{2})(1+l) \equiv 0 \mod(k)$,
{\small
$$\alpha(x,y)=\left(\omega_{3}x, \frac{\omega_{3}^{\frac{(1+u+u^{2})(1+l)}{k}} y^{u}}{(x-\omega_{3}^{2})^{\frac{u^{3}-1}{k}}  (x-\omega_{3}^{2}\lambda)^{l\frac{u^{3}-1}{k}}   }\right),$$
}
and $S/N$ has signature $(0;3,3,k,k)$.

\subsection{} Assume $\bar{N}$ contains ${\mathbb Z}_{2}$. In this case, we may assume ${\mathbb Z}_{2}=\langle a(x)=-x\rangle$.
In this case, there are two situations.

\subsubsection{\rm Case 1}
If $a$ fixes two of the elements of ${\mathcal B}$. We may assume 
$t_{1}=0$, $t_{2}=1$, $t_{3}=-1$, $t_{4}=\lambda$, $t_{5}=-\lambda$ and $t_{6}=\infty$, where $\lambda^{2} \neq 0,1$. Let $\alpha \in N$ be such that $\theta(\alpha)=a$. Since $a$ fixes a point in ${\mathcal B}$, $u_{\alpha}=1$. In this case, 
{\small
$$S: y^{k}=x(x^{2}-1)^{l}(x^{2}-\lambda^{2})^{t},$$
}
where $l,t \in \{1,\ldots,k-1\}$, $\gcd(k,l)=\gcd(k,t)=1$, $2(1+l+t) \equiv 0 \mod(k)$,
{\small
$$\alpha(x,y)=(-x,\omega_{2k}y),$$
}
and $S/N$ has signature $(0;k,k,2k,2k)$.

\subsubsection{\rm Case 2}
In this case, $a$ does not have fixed points in ${\mathcal B}$. We may assume 
$t_{1}=1$, $t_{2}=-1$, $t_{3}=\lambda_{1}$, $t_{4}=-\lambda_{1}$, $t_{5}=\lambda_{2}$ and $t_{6}=-\lambda_{2}$, where $\lambda_{j}^{2} \neq 1,0$, $\lambda_{1}^{2} \neq \lambda_{2}^{2}$.
Let $\alpha \in N$ be such that $\theta(\alpha)=a$ and $u=u_{\alpha}$, so $u^{2} \equiv 1 \mod(k)$. In this case, 
{\small
$$S: y^{k}=(x-1)(x+1)^{u}(x-\lambda_{1})^{l}(x+\lambda_{1})^{ul}(x-\lambda_{2})^{t}(x+\lambda_{2})^{ut},$$
}
where $l,t \in \{1,\ldots,k-1\}$, $\gcd(k,l)=\gcd(k,t)=1$, $(1+u)(1+l+t) \equiv 0 \mod(k)$, 
{\small
$$\alpha(x,y)=\left(-x, \frac{\omega_{2k}^{(1+u)(1+l+t)} y^{u}}{ \left((x+1)(x+\lambda_{1})^{l}(x+\lambda_{2})^{t} \right)^{\frac{u^{2}-1}{k}}   }\right),$$
}
and $S/N$ has signature $(0;2,2,k,k,k)$.

\begin{rema}
For instance, if we take $k=6$, then $u \in \{1,5\}$.
\begin{enumerate}
\item If $u=1$, then
{\small $S: y^{6}=(x^{2}-1)(x^{2}-\lambda_{1}^{2})(x^{2}-\lambda_{2}^{2}), \; \alpha(x,y)=(-x,-y).$}

\item If $u=5$, then
{\small $S: y^{6}=(x-1)(x+1)^{5}(x-\lambda_{1})^{l}(x+\lambda_{1})^{5l}(x-\lambda_{2})^{t}(x+\lambda_{2})^{5t},$
$\alpha(x,y)=\left(-x,\frac{(-1)^{1+l+t}y^{5}}{ \left((x+1)(x+\lambda_{1})^{l}(x+\lambda_{2})^{t} \right)^{4}} \right).$}
If we take $l=t=1$ and $\lambda_{1}\lambda_{2}=-1$, we obtain the extra automorphism 
{\small $\beta(x,y)=\left(\frac{1}{x},\frac{y}{x^{3}}  \right).$}
\end{enumerate}
\end{rema}

\subsection{Example: The prime situation for $n=5$}
Let $p$ be a prime integer and $S$ be a closed Riemann surface of genus $g=2(p-1)$ such that $|{\rm Aut}(S)|=\mu p$, for some integer $\mu \geq 1$. 
In this case, $S$ admits a $p$-gonal automorphism $\tau$ such that $S/\langle \tau \rangle$ has signature $(0;p,p,p,p,p,p)$. 
Let $N \leq {\rm Aut}(S)$ be the normalizer of $\langle \tau \rangle$ and $\bar{N}=N/\langle \tau \rangle \leq {\rm PSL}_{2}({\mathbb C})$.
As observed above, the only possibilities for $\bar{N}$, apart from the trivial, are ${\mathbb Z}_{2}$, ${\mathbb Z}_{3}$, ${\mathbb Z}_{4}$, ${\mathbb Z}_{5}$, ${\mathbb Z}_{6}$, ${\mathbb Z}_{2}^{2}$, ${\mathbb D}_{3}$, ${\mathbb D}_{4}$, ${\mathbb D}_{6}$, ${\mathcal A}_{4}$, and ${\mathfrak S}_{4}$ (only for $p=3$).

\begin{rema}
If $p=2$ or $p \geq 23$, then $N={\rm Aut}(S)$. Moreover, if $p \geq 5$, then $$|N| \in \{2p,3p,5p,6p,4p,6p,8p,12p\}.$$
\end{rema}

Below, following the previous general computations, we write down the corresponding equations for $S$ in these cases for $p \geq 3$. 
\begin{enumerate}[leftmargin=15pt]
\item If $\bar{N} \cong {\mathfrak S}_{4}$, then {\small $S: y^{3}=x(x^{4}-1)$}, $\tau$ is central in $N$, and $S/N$ has signature {\small $(0;2,3,12)$}.

\item If $\bar{N} \cong {\mathcal A}_{4}$, then {\small $S: y^{p}=x(x^{2}-1)^{u}(x^{2}+1)^{u^{2}}$}, where {\small $u \in \{1,\ldots,p-1\}$} satisfies {\small $1+u+u^{2} \equiv 0 \mod(p)$}. In this case, $\tau$ is central only for {\small $p=3$} (for which {\small $u=1$}). The signature of $S/N$ is {\small $(0;3,3,2p)$}.

\item If $\bar{N} \cong {\mathbb D}_{6}$, then {\small $S: y^{p}=(x^{3}-1)(x^{3}+1)^{u}$}, where {\small $1 \leq u \leq p-1$} satisfies {\small $3(1+u) \equiv 0 \mod(p)$}. In particular, if {\small $p \neq 3$}, then {\small $u=p-1$}. In this case, $\tau$ is central in $N$ only for {\small $p=3$} and {\small $u=1$}. The signature of $S/N$ is {\small $(0;2,6,2p)$}.

\item If $\bar{N} \cong {\mathbb D}_{4}$, then {\small $S: y^{p}=x(x^{4}-1)^{\frac{p-1}{2}}$}. In this case, $\tau$ is not central in $N$ and the signature of $S/N$ is {\small $(0;2,2p,4p)$}.

\item If $\bar{N} \cong {\mathbb Z}_{2}^{2}$, then we have two cases: 
\begin{enumerate}
\item {\small $S: y^{p}=x(x^{2}-1)^{l}(x^{2}+1)^{t}$}, where {\small $l,t \in \{1,\ldots,p-1\}$} satisfy that {\small $1+l+t \equiv 0 \mod(p)$}, in which case $\tau$ is central and the signature of $S/N$ is {\small $(0;2p,2p,2p)$}; or

\item {\small $S: y^{p}=x(x^{2}-\lambda^{2})^{\frac{p-1}{2}}(x^{2}-\lambda^{-2})^{\frac{p-1}{2}}$}, where {\small $\lambda^{2} \neq 0,1$}. In this case, again $\tau$ is central and the signature of $S/N$ is {\small $(0;2,2,p,2p)$}.

\end{enumerate}

\item If $\bar{N} \cong {\mathbb D}_{3}$, then {\small $S: y^{p}=(x^{3}-1)(x^{3}-\lambda^{2})^{p-1}$}, where {\small $\lambda^{3} \neq 0,1$}. In this case, $\tau$ is not central in $N$ and the signature of $S/N$ is {\small $(0;2,2,3p)$}. 

\item If $\bar{N} \cong {\mathbb Z}_{6}$, then {\small $S: y^{p}=(x-1)(x-\omega_{6})^{u}(x-\omega_{6}^{2})^{u^{2}}(x-\omega_{6}^{3})^{u^{3}}(x-\omega_{6}^{4})^{u^{4}}(x-\omega_{6}^{5})^{u^{5}}$},
where {\small $u \in \{1,\ldots,p-1\}$} satisfies that {\small $1+u+u^{2}+u^{3}+u^{4}+u^{5} \equiv 0 \mod(p)$}. In this case, $\tau$ is central in $N$ only for {\small $p=3$} (for which {\small $u=1$}). The signature of $S/N$ is {\small $(0;6,6,p)$}.

\item If $\bar{N} \cong {\mathbb Z}_{5}$, then {\small $S: y^{p}=x(x^{5}-1)^{l}$}, where {\small $l \in \{1,\ldots,p-1\}$} satisfies that {\small $1+5l \equiv 0 \mod(p)$}. In this case, $\tau$ is central in $N$ and the signature of $S/N$ is {\small $(0;5,p,5p)$}.

\item If $\bar{N} \cong {\mathbb Z}_{4}$, then {\small $S: y^{p}=x(x^{4}-1)^{l},$} where {\small $l \in \{1,\ldots,p-1\}$, $\gcd(k,l)=1$}, and {\small $1+4l \nequiv 0 \mod(p)$}. In this case, $\tau$ is central in $N$ and the signature of $S/N$ is {\small $(0;p,4p,4p)$}.

\item If $\bar{N} \cong {\mathbb Z}_{3}$, then {\small $S: y^{p}=(x-1)(x-\omega_{3})^{u}(x-\omega_{3}^{2})^{u^{2}}(x-\lambda)^{l}(x-\omega_{3}\lambda)^{ul}(x-\omega_{3}^{2}\lambda)^{u^{2}l}$},
where {\small $\lambda^{3} \neq 0,1$} and {\small $u,l \in \{1,\ldots,p-1\}$} satisfy that either {\small $l=p-1$} or {\small $1+u+u^{2} \equiv 0 \mod(p)$}. In this case, $\tau$ is central in $N$ only for {\small $u=1$} and {\small $3(1+l) \equiv 0 \mod(p)$}. In particular, for {\small $p \geq 5$}, $\tau$ is central in $N$ only for {\small $l=p-1$} and {\small $u=1$}. The signature of $S/N$ is {\small $(0;3,3,p,p)$}.

\item If $\bar{N} \cong {\mathbb Z}_{2}$, then we have two cases: 
\begin{enumerate}
\item {\small $S: y^{p}=x(x^{2}-1)^{l}(x^{2}-\lambda^{2})^{t}$}, where {\small $\lambda^{2} \neq 0,1$}, and {\small $l,t \in \{1,\ldots,p-1\}$} satisfy that {\small $1+l+t \equiv 0 \mod(p)$}. In this case $\tau$ is central in $N$ and the signature of $S/N$ is {\small $(0;p,p,2p,2p)$}; or

\item {\small $S: y^{p}=(x-1)(x+1)^{u}(x-\lambda_{1})^{l}(x+\lambda_{1})^{ul}(x-\lambda_{2})^{t}(x+\lambda_{2})^{ut}$},  where $\lambda_{j}^{2} \neq 1,0$, $\lambda_{1}^{2} \neq \lambda_{2}^{2}$
and either {\small $u=p-1$} or {\small $1+l+t \equiv 0 \mod(p)$}. In this case, $\tau$ is central in $N$ only for {\small $u=1$} and {\small $1+l+t \equiv 0 \mod(p)$}. The signature of $S/N$ is {\small $(0;2,2,p,p,p)$}. 
\end{enumerate}
\end{enumerate}

\begin{rema} 
If either (a) $p \geq 7$, or (b) $p=5$ and $\bar{N} \neq {\mathbb Z}_{5}$, or (c) $p=3$ and $\bar{N} \neq {\mathbb Z}_{3}, {\mathbb Z}_{6}, {\mathbb D}_{3}, {\mathbb D}_{6}, {\mathcal A}_{4}, {\mathfrak S}_{4}$,
then (by the Schur-Zassenhaus theorem), $N \cong {\mathbb Z}_{p} \rtimes \bar{N}$.
Moreover, as for $\bar{N} \in \{{\mathfrak S}_{4}, {\mathbb D}_{4}, {\mathbb Z}_{2}^{2}, {\mathbb Z}_{5}, {\mathbb Z}_{4}\}$, $\tau$ is central in $N$, then $N \cong {\mathbb Z}_{p} \times \bar{N}$.
\end{rema}



\end{document}